\patchcmd{\maketitle}{\@fnsymbol}{\@arabic}{}{}
\title{Reverse Quantile-RK and its Application to Quantile-RK}
\author{Emeric Battaglia\footnote{\url{ebattagl@uci.edu}, corresponding author}, Anna Ma\\\\ \textit{Department of Mathematics, University of California, Irvine, CA 92697 USA}}
\date{\today}
\newtheorem{theorem}{\textit{Theorem}}
\newtheorem{lem}{\textit{Lemma}}
\newtheorem{cor}{\textit{Corollary}}
\newcommand{\groupp}[1]{\l(#1\r)}
\newcommand{\groupb}[1]{\l[#1\r]}
\newcommand{\groupc}[1]{\l\{#1\r\}}
\newcommand{\groupabs}[1]{\l|#1\r|}
\renewcommand{\l}{\left}
\renewcommand{\r}{\right}
\renewcommand{\a}{\hat{a}}
\newcommand{\E}{\mathbb{E}}
\newcommand{\N}{\mathbb{N}}
\newcommand{\PP}{\mathbb{P}}
\newcommand{\R}{\mathbb{R}}
\newcommand{\x}{{x^\ast}}
\newcommand{\s}{\sigma}
\newcommand{\ve}{\varepsilon}
\newcommand{\sset}{\subseteq}
\newcommand{\sm}{\setminus}
\newcommand{\inv}{^{-1}}
\newcommand{\dl}{^*}
\newcommand{\ra}{\rightarrow}
\renewcommand{\lll}{\langle}
\newcommand{\rrr}{\rangle}
\renewcommand{\leq}{\leqslant}
\renewcommand{\geq}{\geqslant}
\newcommand{\Def}{\vcentcolon =}
\newcommand{\Mod}[1]{\ (\mathrm{mod}\ #1)}
\DeclareMathOperator*{\argmax}{arg\,max}
\DeclareMathOperator{\quant}{-quant}
\pgfplotsset{compat=newest}
\begin{document}
\maketitle

\begin{abstract}
    When solving linear systems $Ax=b$, $A$ and $b$ are given, but the measurements $b$ often contain corruptions. Inspired by recent work on the quantile-randomized Kaczmarz method, we propose an acceleration of the randomized Kaczmarz method using quantile information. We show that the proposed acceleration converges faster than the randomized Kaczmarz algorithm. In addition, we show that our proposed approach can be used in conjunction with the quantile-randomized Kaczamrz algorithm, without adding additional computational complexity, to produce both a fast and robust iterative method for solving large, sparsely corrupted linear systems. Our extensive experimental results support the use of the revised algorithm. 
\end{abstract}

\centerline{\noindent\textbf{Key Words:} Kaczmarz method, quantile methods, randomized iterative methods, sparse corruption}

\section{Introduction}

    In signal and image processing, the recovery of the solution $\x$ to linear a system of equations 
    \begin{equation}
        Ax=b, \label{eq:syseq}
    \end{equation}
    for given $A,b$ is of particular interest. In light of modern computational advances that allow the collection of vast amounts of data, research steered towards a renewed interest in the iterative methods such as the Kaczmarz method \cite{kacz}, alternatively known as \texttt{ART} \cite{ART}. Iterates of the Kaczmarz algorithm are given by projecting the last iterate onto a hyperplane $\groupc{x:\lll x,a_i\rrr=b_i}$ where $i$ is chosen deterministically at each iteration and $a_i$ is the $i$-th row of $A$. However, providing general convergence estimates for this method has proven to be a challenge. This is partly due to the convergence rate being affected by the order in which the aforementioned hyperplanes are chosen. Strohmer and Vershynin introduced a randomized variant, which we refer to as \texttt{RK}, of this method that overcomes this issue \cite{RK}. Namely, choosing the indices $i$ randomly with probability proportional to $\|a_i\|^2$ at each iteration allowed them to prove exponential convergence of \texttt{RK}. Heuristically, we can argue that choosing the row index at random allows us to say \texttt{RK} is expected to converge at the same rate independent of the order of the rows of $A$. Moreover, \texttt{RK} is readily applicable to large-scale systems where only a few rows of the matrix might be known or loaded into memory at any given time. Perhaps unsurprisingly, this has spawned further research of variations of \texttt{RK} that may accelerate convergence under appropriate conditions; for examples, see \cite{pavedwithgoodintentions,greedyavgblocksparse, greedworks, gRK, gRK2,SKM}. 
    
    The exponential convergence to the solution of the system (\ref{eq:syseq}) requires the system to be consistent. In practice, this may not be the case. When the system is inconsistent, we see that \texttt{RK} does not converge to the typically sought-after least-squares solution. More specifically, if (\ref{eq:syseq}) has a least squares solution $\hat{x}$ and we apply \texttt{RK}, the iterates $x_k$ converge to $\hat{x}$ up to an error horizon dependent on the magnitude of corruption of $b$~\cite{needell_LS}. Similarly, \cite{doublynoisysystem} proves convergence up to an error horizon when there is noise in both $A$ and $b$. If, instead, a least-squares solution is desirable, Zouzias and Freris \cite{Zouzias_REK} offers an adaptation of \texttt{RK} called the randomized extended Kaczmarz method, in which iterates are were shown to converge in expectation to the least-norm least-squares solution to (\ref{eq:syseq}). In place of seeking the least-norm least-squares solution, it might be more important to be able to find a sparse least-squares solution. To this end, \cite{sparseREK} successfully proves expected linear convergence to a sparse least-squares solution using another variation of \texttt{RK}. 
    
    Now suppose that the right-hand side $b$ has a number of corruptions: If we write $b=b_t+b_\ve$ where $b_t$ is the true measurement and $b_\ve$ is the corruption introduced to $b_t$ to attain the empirical value $b$, it is a natural question to ask whether there is still hope to recover the solution $\x$ to $Ax=b_t$. Previous work that considers noise on the right-hand side indicates that the iterates can only be guaranteed to converge to within a ball of the solution. However, in the case that $b_\ve$ is sparse, \cite{qRK} and \cite{qRK2} show a modification of \texttt{RK} witnesses convergence to $\x$ provided $A$ contains enough redundant data that information is still salvageable despite possible adversarial corruption. More specifically, the modification involves computing the $q$-th quantile $Q$ of the absolute value of the entries of the residual $Ax_k-b$ and only allows rows corresponding to residual entries less than or equal to $Q$ in magnitude to be chosen. However, this strategy for overcoming corruption introduces a slower convergence rate. In this work, we propose an accelerated Kaczmarz algorithm that can be used in conjunction with \texttt{qRK} to produce a fast yet robust iterative method for solving sparsely corrupted linear systems.
    
    \subsection{Notation}
      Before presenting relevant background, we first define useful notation. Let $[n]$ denote the set $\{1,\dots,n\}$. We use $\|\cdot\|_F$ to denote the Frobenius norm, $\|\cdot\|$ to denote the Euclidean norm, and $\|x\|_0$ to denote the number of non-zero entries of $x$. For $A\in \R^{m\times n}$ and $S\sset [m]$, we will use $A_S \in \mathbb{R}^{|S| \times n}$ to denote the submatrix of $A$ given by the rows of $A$ indexed by $S$. When referencing a single row of $A$, we will write $a_i$ in place of $a_{\{i\}}$, and we define $\a_i\Def \frac{a_i}{\|a_i\|}$ to be the normalized $i$-th row $A$. We use $\beta$ to express the fraction of entries of $b$ that are corrupt. In the event $\beta>0$, we decompose $b=b_t+b_\ve$ where $b_t$ is the true right hand side of the system of equations and $b_\ve$ is nonzero except in the entries where $b$ is corrupted. We use $\x$ to denote the solution to the system (\ref{eq:syseq}), where $b$ is replaced with $b_t$ if $\beta>0$. Finally, for any finite multiset $S$, we let $q\quant(S)$ denote the $q$-th quantile of $S$. We assume that $q|S|$ is always integer valued. Because $S$ is a multiset, it is possible that the multiset $L=\{s: s\leq q\quant(S)\}$ has more than $q|S|$ entries, so we adopt the convention that $L$ is constructed using the first $q|S|$ entries satisfying $s\leq q\quant(S)$. We also let $\{s:s>q\quant(S)\}$ be the multiset constructed by removing the elements of $L$ from $S$ according to multiplicity. If $S$ can be indexed by a set $I\sset \N$, i.e. $S=\{s_i\}_{i\in I}$, we adopt the convention that $L\dl = \{i\in I: s_i \leq q\quant(S)\}$ is constructed using the first $q|S|$ entries satisfying $s_i\leq q\quant(S)$, and $\{i\in I:s_i>q\quant(S)\}$ is constructed by removing the elements of $S$ indexed by $L\dl$ from $S$.
    
    \subsection{Background}
        The Kaczmarz method is a deterministic iterative algorithm designed to approximate a solution $\x\in \R^n$ to $Ax=b$ when $A\in \R^{m\times n}$ and $b\in\R^m$ are known \cite{kacz}. Given some initial $x_0\in\R^n$, the iterates are defined as
        \begin{equation}
            x_{k+1}=x_k+\frac{b_i-\lll x_k,a_i\rrr}{\|a_i\|^2}a_i. \label{eq:iterate_update}
        \end{equation}
        That is, $x_{k+1}$ is the projection of $x_k$ onto the hyperplane $\{x\in\R^n:\lll a_i,x\rrr=b_i\}$ where $a_i$ is the $i$-th row of $A$, $b_i$ is the $i$-th entry of $b$, and $i= k+1 \Mod{m}$. Strohmer and Vershynin showed that if $A$ is over-determined and of full rank, and (\ref{eq:syseq}) is a consistent system, choosing the index $i$ with probability $\frac{\|a_i\|^2}{\|A\|_F^2}$ implies the approximates $x_k$ converge to the true solution $\x$ and the expected error can be bounded by: 
        \begin{equation}
            \E\groupb{\|x_k-\x\|^2}\leq \groupp{1-\frac{\s_{\min}^2(A)}{\|A\|_F^2}}^k\|x_0-\x\|^2,
        \end{equation}
        where $\s_{\min}(A)$ is the smallest singular value of $A$. We will write \texttt{RK} to denote the randomized Kaczmarz method just outlined, i.e. (\ref{eq:iterate_update}) with $i$ chosen randomly with probability proportional to row norms \cite{RK}. 
        
        If the system is inconsistent as a result of corruption in $b$, a modification of \texttt{RK} proposed by \cite{qRK2} may have convergence results provided the conditioning of the matrix $A$ in relation to the support of the error is sufficiently well-behaved. In the presence of spares corruption in $b$, a modification of \texttt{RK} was proposed and shown to converge to $\x$ provided the conditioning of $A$ is sufficiently well-behaved \cite{qRK2, qRK}. More precisely, \cite{qRK2} and \cite{qRK} examine the case where the system is given by $Ax=b$ where $b=b_t+b_\ve$, $Ax=b_t$ is consistent, $A$ is a row-normalized, over-determined, full-rank matrix, and $\|b_\ve\|_0\leq \beta m$ for a choice of $\beta\in[0,1]$. This modification of the \texttt{RK} algorithm, which is known as quantile-based randomized Kaczmarz algorithm (\texttt{qRK}), is outlined in Algorithm \ref{alg:qrk}.
        \begin{algorithm}[H]
            \caption{Quantile-based Randomized Kaczmarz Method (\texttt{qRK})}
            \begin{algorithmic}
                \State \textbf{Inputs: } $A,\, b,\, q,\, x_0,\, K$ 
                \State $k \gets 0$
                \For{$k<K$} 
                    \State $Q \gets q\quant\groupp{\groupc{\{\frac{|\lll x_k,a_j\rrr - b_j|}{\|a_j\|}:j\in [m]}}$ 
                    \State $S \gets \groupc{j\in [m]: \frac{|\lll x_k,a_j\rrr - b_j|}{\|a_j\|}\leq Q }$
                    \State Select $i\in S$ with probability $\frac{\|a_i\|^2}{\|A_S\|_F^2}$
                    \State $x_{k+1} \gets x_k + \frac{\groupp{b_i - \lll x_k , a_i\rrr }}{\|a_i\|}a_i$
                    \State $k\gets k+1$
                \EndFor
            \end{algorithmic}
            \label{alg:qrk}
        \end{algorithm} 
        
        The convergence rate of Algorithm \ref{alg:qrk} is a function of the smallest singular value of submatrices of $A$:
        \begin{equation}
            \s_{\alpha,\min}(A)=\min_{\substack{S\sset [m] \\ |S|=\alpha m}}\inf_{\|x\|=1}\|A_S x\|,
        \end{equation}
        which was introduced in~\cite{qRK} and is a slight modification of the term introduced in~\cite{qRK2}. This term is relevant in bounding $\sum_{i\in I} \groupabs{\lll x_k,a_i \rrr-b_i}^2$ for $I\sset [m]$ at each iteration $k$. Namely, when it is known $I$ does not include any of the indices of rows that might have corruption in $b$, it follows that 
        \[
            \sum_{i\in I} \groupabs{\lll x_k,a_i \rrr-b_i}^2 = \|A_I x_k - A_I \x\|^2 \geq \s_{\min}^2(A_I)\|x_k-\x\|^2.
        \]
        Because it is not necessarily known at each iteration $k$ which rows are not corrupt and are permissible to choose from, the best one can hope for is to provide a uniform bound on $\s_{\min}(A_I)$ for all $I\sset [m]$ of the some fixed cardinality. This is exactly the purpose $\s_{\alpha,\min}(A)$ serves. For the results below to be substantial, we assume that the matrices we are dealing with are conditioned well enough that removing some rows does not reduce the smallest singular value of the subsystem too much. Some matrices have a specific structure that might make them particularly susceptible to this poor condition. For example, empirically, sparse matrices commonly fail to have a large enough $\s_{\alpha,\min}$ term or are otherwise not sufficiently well-conditioned; this can be thought of as the matrix not having enough data to make a sizeable portion of the rows redundant. This numerical behavior is presented in Section \ref{sec:final_rem} to demonstrate cases when Algorithms \ref{alg:qrk} and \ref{alg:dqrk} might struggle or outright fail.
        
        As was done with \texttt{RK}, we will refer to Algorithm \ref{alg:qrk} as \texttt{qRK}. Other works have presented variations of \texttt{qRK} that allow for a noticeable increase in convergence rate or make it more versatile. We refer to the reader to \cite{subsamplingqRK, timevaryingcorruptionqRK, blockaccelqRK} for improvements on \texttt{qRK}. 
    
\section{Contributions}
    \label{sec:contributions}
    When choosing a row to use in the projection step at each iteration of \texttt{RK}, intuition suggests that projecting onto the furthest hyperplane would yield the largest progress toward the solution vector $\x$. Indeed, choosing the row $i=\argmax_{j\in [m]}\groupc{\groupabs{\lll x_k,a_j\rrr -b_j}}$ at iteration $k$ is the principle design of the Motzkin method \cite{MM}. 

    To still allow for an element of randomness, we introduce Algorithm \ref{alg:rqrk}, which randomly chooses from indices $i$ where 
    \begin{equation}
        \frac{|\lll x_k,a_i\rrr - b_i|}{\|a_i\|} \label{eq:modified_res}
    \end{equation}
    exceeds a quantile threshold. Though computing the quantile increases the computational cost, this serves to accelerate \texttt{RK} as Algorithm \ref{alg:rqrk} is forced to choose indices $i$ corresponding to larger values of (\ref{eq:modified_res}), which is the distance between successive iterates of $x_k$. 

    \begin{algorithm}[H]
        \caption{Reverse Quantile-Based Randomized Kaczmarz Method (\texttt{rqRK})}
        \begin{algorithmic}
            \State \textbf{Inputs: } $A,\, b,\, q,\, x_0,\, K$
            \State $k \gets 0$
            \For{$k<K$}
                \State $Q \gets q\quant\groupp{\groupc{\frac{|\lll x_k,a_j\rrr - b_j|}{\|a_j\|}:j\in [m]}}$ 
                \State $S \gets \groupc{j\in [m]: \frac{|\lll x_k,a_j\rrr - b_j|}{\|a_j\|}> Q }$
                \State Select $i\in S$ with probability $\frac{\|a_i\|^2}{\|A_S\|_F^2}$ 
                \State $x_{k+1} \gets x_k + \frac{b_i - \lll x_k , a_i\rrr }{\|a_i\|^2}a_i$
                \State $k\gets k+1$
            \EndFor
        \end{algorithmic}
        \label{alg:rqrk}
    \end{algorithm}
    Because Algorithm \ref{alg:rqrk} ``reverses'' the direction of the inequality in the choice of $S$ of Algorithm~\ref{alg:qrk}; we will call this the reverse quantile-based randomized Kaczmarz method, or \texttt{rqRK} for short. 
    
    Our main theoretical result guarantees the convergence of Algorithm \ref{alg:rqrk}, which we highlight is an acceleration of \texttt{RK}. We state this result here:
    \begin{restatable}[Main Result]{theorem}{mainresult}
        Assume $A$ is an over-determined, full-rank, and $\x$ satisfies $A\x=b$. Suppose $x_0$ satisfies $\langle x_0,a_i\rangle=b_i$ for some $i\in[m]$. Then \texttt{rqRK} with parameter $\frac{1}{m}\leq q \leq \frac{m-1}{m}$ yields
        \[
            \E\groupb{\|x_k-\x\|^2}\leq \groupp{1-\frac{\s_{\min}^2(A)}{\|A\|_F^2} -\frac{\s_{q,\min}^2(A)}{qm}\frac{\min_{1\leq j\leq m}\groupc{\frac{\|a_j\|^2}{\|A\|_F^2}}}{\max_{1\leq j \leq m}\{\|a_j\|^2\}} }^k\|x_0-\x\|^2.
        \]
        In particular, for row-normalized matrices, this simplifies to
        \[
            \E\groupb{\|x_k-\x\|^2}\leq \groupp{1-\frac{\s_{\min}^2(A)}{m}-\frac{\s_{q,\min}^2(A)}{qm^2}}^k\|x_0-\x\|^2.
        \]
        \label{theorem:rqrk}
    \end{restatable}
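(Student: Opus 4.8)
The plan is to mimic the classical Strohmer--Vershynin analysis but to split the expected one-step progress into a ``guaranteed'' part coming from the full matrix and a ``bonus'' part coming from the fact that \texttt{rqRK} is forced to pick a row from the upper quantile set $S$. Fix an iterate $x_k$ with error $e_k \Def x_k - \x$, and write $r_j \Def \frac{|\langle x_k,a_j\rangle - b_j|}{\|a_j\|} = \frac{|\langle e_k,a_j\rangle|}{\|a_j\|}$ for the (normalized) residual entries, using $A\x = b$. As in the standard \texttt{RK} computation, projecting onto hyperplane $i$ gives the exact Pythagorean identity $\|e_{k+1}\|^2 = \|e_k\|^2 - \frac{\langle e_k,a_i\rangle^2}{\|a_i\|^2} = \|e_k\|^2 - r_i^2$. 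Taking conditional expectation over the random choice $i\in S$ with probability $\|a_i\|^2/\|A_S\|_F^2$,
\[
    \E\big[\|e_{k+1}\|^2 \,\big|\, x_k\big] = \|e_k\|^2 - \sum_{i\in S}\frac{\|a_i\|^2}{\|A_S\|_F^2} r_i^2 = \|e_k\|^2 - \frac{1}{\|A_S\|_F^2}\sum_{i\in S}\langle e_k,a_i\rangle^2.
\]
So everything reduces to lower-bounding $\sum_{i\in S}\langle e_k,a_i\rangle^2$ and controlling $\|A_S\|_F^2$.

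For the lower bound, I would first discard the rows outside $S$ crudely: since $S$ consists of the indices with the $(1-q)m$ \emph{largest} values of $r_j$, and there are $qm$ indices outside $S$, we have $\sum_{j\notin S} r_j^2 \le qm \cdot \max_{j\notin S} r_j^2 \le qm \cdot \min_{i\in S} r_i^2 \le \frac{qm}{|S|}\sum_{i\in S} r_i^2$. Rearranging, $\sum_{i\in S}\langle e_k,a_i\rangle^2 \ge \sum_{i\in S} r_i^2 \ge \frac{|S|}{qm + |S|}\sum_{j=1}^m r_j^2$; but $|S| = (1-q)m$ so $qm + |S| = m$, giving the clean bound $\sum_{i\in S} r_i^2 \ge (1-q)\sum_{j=1}^m r_j^2$. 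That alone, combined with $\sum_j r_j^2 \ge \sum_j \langle e_k,a_i\rangle^2\cdot \min_j \|a_j\|^{-2} $ — actually more simply, for row-normalized $A$, $\sum_j r_j^2 = \|Ae_k\|^2 \ge \sigma_{\min}^2(A)\|e_k\|^2$ — would only yield a $(1-q)$-type rate, which is worse than \texttt{RK} and \emph{not} what the theorem claims. So the real idea must be a \emph{two-term} split: write $\sum_{i\in S}\langle e_k,a_i\rangle^2 = \sum_{i\in S}\langle e_k,a_i\rangle^2$, and separately bound it below by (a) $\sigma_{\min}^2(A_S)\|e_k\|^2 \ge \sigma_{q,\min}^2(A)\|e_k\|^2$ — wait, $|S|=(1-q)m$, not $qm$; here I would invoke that $\sigma_{\alpha,\min}$ is monotone enough, or rather note $|S| = (1-q)m \ge$ something — I need to be careful that the theorem's term is $\sigma_{q,\min}$, so perhaps the argument instead keeps \emph{all} of $S$ plus picks up the full-matrix term by a convexity/averaging trick.

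The cleaner route, which I would actually pursue: bound $\sum_{i\in S}\langle e_k,a_i\rangle^2 \ge \max\{\,T_1,\ T_2\,\} \ge \lambda T_1 + (1-\lambda) T_2$ for any $\lambda\in[0,1]$, where $T_1$ captures a full-\texttt{RK}-like term and $T_2$ captures the quantile bonus. For $T_2$: among the $qm$ \emph{smallest-residual} rows that were excluded, together with the single row $i_0$ on which $x_0$ (hence the relevant structure) — actually use the hypothesis $\langle x_0,a_{i_0}\rangle = b_{i_0}$ only to guarantee... hmm, that hypothesis ensures $r_{i_0}=0$ at step $0$, so $i_0\notin S$ initially; its role is likely to make the complement set $[m]\setminus S$ \emph{contain} a ``safe'' structure or simply to seed the induction. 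I would lower bound $\sum_{i\in S}r_i^2 \ge \frac{1}{qm}\,\big(\text{stuff about excluded rows}\big)$ and feed the excluded-rows sum through $\sigma_{q,\min}^2(A)$: since the excluded set $L^\ast$ has $|L^\ast| = qm$, we get $\sum_{j\in L^\ast}\langle e_k,a_j\rangle^2 \ge \sigma_{q,\min}^2(A)\|e_k\|^2$, and each excluded $r_j^2 \le \min_{i\in S} r_i^2$, so $qm\cdot\min_{i\in S}r_i^2 \ge \sum_{j\in L^\ast}r_j^2 \ge \frac{\sigma_{q,\min}^2(A)}{\max_j\|a_j\|^2}\|e_k\|^2$, whence $\sum_{i\in S}r_i^2 \ge |S|\cdot\min_{i\in S}r_i^2 \ge \frac{|S|}{qm}\cdot\frac{\sigma_{q,\min}^2(A)}{\max_j\|a_j\|^2}\|e_k\|^2$. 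Meanwhile $T_1$: trivially $\sum_{i\in S}\langle e_k,a_i\rangle^2 \le \|Ae_k\|^2$ is the wrong direction, so instead I get the $\sigma_{\min}^2(A)/\|A\|_F^2$ term by observing $\|A_S\|_F^2 \le \|A\|_F^2$ in the denominator and using $\sum_{i\in S}\langle e_k,a_i\rangle^2 \ge \|Ae_k\|^2 - \sum_{j\notin S}\langle e_k,a_j\rangle^2$, then bounding the excluded inner products by $\sum_{j\notin S}\langle e_k,a_j\rangle^2 \le qm\cdot\max_j\|a_j\|^2\cdot\min_{i\in S}r_i^2$ and trading that against part of the $T_2$ budget. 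Combining, dividing by $\|A_S\|_F^2 \le \|A\|_F^2$, and using $\min_j\|a_j\|^2/\|A\|_F^2 \le \|a_i\|^2/\|A_S\|_F^2$ appropriately, yields the stated contraction factor; the row-normalized case is then immediate by substituting $\|a_j\| = 1$, $\|A\|_F^2 = m$, $|S|/qm \ge 1/(qm)$ (since $|S|=(1-q)m\ge 1$). Finally one removes the conditioning and iterates over $k$. The main obstacle is the bookkeeping in the previous sentence: making the $T_1$/$T_2$ split tight enough that the \emph{sum} of the two savings appears (rather than just the max), which I expect requires carefully allocating the $qm$ excluded rows — using them for the $\sigma_{q,\min}$ bonus while simultaneously arguing they were not ``needed'' for the $\sigma_{\min}$ term because $\|Ae_k\|^2 - \sigma_{\min}^2(A)\|e_k\|^2$ leaves enough slack; getting the constants exactly as in the theorem (the $\min_j\|a_j\|^2$ in the numerator, the $\max_j\|a_j\|^2$ in the denominator) is where the delicacy lies.
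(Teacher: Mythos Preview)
Your proposal has a genuine gap: the additive combination of the two terms is exactly the crux, and the direct-bound approach you sketch does not produce it. When you try to lower bound $\frac{1}{\|A_S\|_F^2}\sum_{i\in S}\langle e_k,a_i\rangle^2$ by $T_1$ (an \texttt{RK}-like $\sigma_{\min}^2(A)/\|A\|_F^2$ term) and $T_2$ (a $\sigma_{q,\min}$ bonus), you can get each separately, but your ``$\max\{T_1,T_2\}\ge \lambda T_1+(1-\lambda)T_2$'' maneuver and your $T_1$ computation via $\|Ae_k\|^2-\sum_{j\notin S}\langle e_k,a_j\rangle^2$ both lose too much: the excluded sum you subtract is not controlled by anything you can trade against $T_2$ without eating the whole bonus. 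You correctly diagnose this as the obstacle but do not resolve it.

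The paper sidesteps the direct bound entirely. Its key device is a small probability lemma: if $X$ takes values in $[m]$ with $\PP(X=j)=p_j$, $Y$ is $X$ conditioned on $X\notin[\ell]$, and $f$ is nondecreasing, then $\E[f(Y)]\ge \E[f(X)]+\sum_{j=1}^{\ell}\big(f(j+1)-f(j)\big)\PP(X=j\mid X\notin[j-1])$. Applied with $f$ the sorted list of values $|\langle e_{k-1}/\|e_{k-1}\|,\hat a_j\rangle|^2$ and $\ell=qm$, this says directly that the \texttt{rqRK} one-step expected progress is at least the \texttt{RK} one-step expected progress \emph{plus} a telescoping bonus, giving the sum automatically. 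The telescope collapses to $\big(f(qm+1)-f(1)\big)\cdot\min_j(\|a_j\|^2/\|A\|_F^2)$. Here is where the hypothesis on $x_0$ enters, and you misread its role: it is not about seeding anything at step $0$ only. After \emph{every} projection the current iterate lies on some hyperplane, so at every step at least one normalized residual is exactly zero, forcing $f(1)=0$; the bonus is then $f(qm+1)\cdot\min_j(\|a_j\|^2/\|A\|_F^2)\ge (Q^2/\|e_{k-1}\|^2)\cdot\min_j(\|a_j\|^2/\|A\|_F^2)$. Finally $Q$ is bounded below via the excluded set of size $qm$ exactly as in your $T_2$ computation, yielding $Q^2\ge \sigma_{q,\min}^2(A)\|e_{k-1}\|^2/(qm\max_j\|a_j\|^2)$. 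So the missing idea is the comparison-of-expectations lemma; once you have it, the additive structure is free and no $T_1/T_2$ bookkeeping is needed.
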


    The hypothesis requiring $x_0$ to lie on some solution hyperplane is not, to the best of the authors' knowledge, present in other works surrounding \texttt{RK}. This is a minor detail that ensures at least one entry of the residual $Ax_1-b$ is 0. The importance of this, and how to bypass it with a slight modification to the statement of Theorem \ref{theorem:rqrk}, is explained at the end of Theorem \ref{theorem:rqrk} in Section \ref{sec:rqrkanalysis}. 

    When $qm<n$, we have $\s_{q,\min}(A)=0$, so our bound degrades back to the original \texttt{RK} bound. We also assume $q\geq \frac{1}{m}$ because otherwise \texttt{rqRK} degrades to \texttt{RK}, where again, the Strohmer and Vershynin bound holds \cite{RK}. More nuances of the result are discussed at the end of Section \ref{sec:rqrkanalysis}. 
    
    The issue with always choosing rows that are ``most violated" as in \texttt{rqRK} is that such a scheme leads to a minimal tolerance of corruption; in practice, the $b$ term in equation (\ref{eq:syseq}) may have corruption. It stands to reason that the indices $i$ corresponding to corrupted $b_i$ entries would typically yield among the largest of the terms $\groupc{\groupabs{\lll x_k,a_j\rrr -b_j}/\|a_j\|}_{j=1}^m$ when $x_k$ is close to $\x$; this would lead \texttt{rqRK}, and similarly the Motzkin method, to often project iterates $x_k$ onto a hyperplane stemming from a corrupted row, i.e. away from our desired solution $\x$. To address this, \texttt{qRK} uses an upper quantile to reduce the likelihood of choosing a corrupted index. Embedding Motzkin's method into \texttt{qRK} by electing to project onto the furthest hyperplane indexed by the admissible indices $S$ can still be fatal to the convergence of the iterates. Namely, it is possible that an index for a corrupted row $i$ might still lie in the set of admissible indices $S$, and if corrupted rows are the most violated, we may find this method chooses a corrupted row too frequently. 
    
    We introduce a second quantile to avoid this issue while improving the convergence rate of \texttt{qRK}. This second quantile allows us to choose rows that will project $x_k$ further on average than typical in \texttt{qRK} while still selecting indices randomly enough that corrupted indices do not appear often enough to inhibit convergence. This is the basis for Algorithm \ref{alg:dqrk}, which serves to accelerate \texttt{qRK} with minimal increased computational cost.        
    
    Based on the discussion on how we might accelerate the convergence of \texttt{qRK}, we formally outline Algorithm \ref{alg:dqrk}:
    \begin{algorithm}[H]
        \caption{Double Quantile-Based Random Kaczmarz Method (\texttt{dqRK})}
        \begin{algorithmic}
            \State \textbf{Inputs: } $A,\, b,\, q_0,\, q_1,\, x_0,\, K$
            \State $k \gets 0$
            \For{$k<K$}
                \State $Q_0 \gets q_0\quant\groupp{\groupc{\frac{|\lll x_k,a_j\rrr - b_j|}{\|a_j\|}:j\in [m]}}$
                \State $Q_1 \gets q_1\quant\groupp{\groupc{\frac{|\lll x_k,a_j\rrr - b_j|}{\|a_j\|}:j\in [m]}}$
                \State $S \gets \groupc{j\in [m]: Q_0< \frac{|\lll x_k,a_j\rrr - b_j|}{\|a_j\|} \leq Q_1 }$
                \State Select $i\in S$ with probability $\frac{\|a_i\|^2}{\|A_{S}\|_F^2}$
                \State $x_{k+1} \gets x_k + \frac{b_i - \lll x_k , a_i\rrr }{\|a_i\|^2}a_i$
                \State $k\gets k+1$
            \EndFor
        \end{algorithmic}
        \label{alg:dqrk}
    \end{algorithm}
    It is now clear that elements of \texttt{qRK} and \texttt{rqRK} are married to procure Algorithm \ref{alg:dqrk}. Namely, the upper quantile acts as in \texttt{qRK} to control the impact of corruptions, and the lower quantile component acts as in \texttt{rqRK} to, on average, increase the distance between iterates, thereby increasing the convergence rate. In light of there being two quantiles utilized, we call Algorithm \ref{alg:dqrk} the double quantile-based randomized Kaczmarz algorithm, or \texttt{dqRK} for short. Given both \texttt{qRK} and \texttt{rqRK} play a role in creating \texttt{dqRK}, it may come as no surprise that characteristics of the convergence results from both are emulated in the convergence results of \texttt{dqRK}:
    
    \begin{restatable}{theorem}{dqrkresult}
        Suppose $A$ is row-normalized, over-determined, and full-rank. Let $\x$ be a solution to the system $Ax=b_t$ and $b=b_t+b_\ve$ where $\|b_\ve\|_0\leq \beta m$. Suppose $\beta < q_0 < q_1 < 1-\beta$, $q_1-q_0>\beta$, and 
        \[
            \frac{q_1}{q_1-q_0-\beta}\groupp{\frac{2\sqrt{\beta}}{\sqrt{1-q_1-\beta}}+\frac{\beta}{1-q_1-\beta}} < \frac{1}{\s_{\max}^2(A)}\groupp{\s_{q_1-\beta,\min}^2(A)+\frac{\s_{q_0-\beta,\min}^2(A)}{q_0m}}.
        \]
        If $x_0$ is chosen such that $\langle x_0, a_i\rangle = b_i$ for some $i\in [m]$, then the expected error of the \texttt{dqRK} algorithm decays as
        \[
            \E\|x_{k}-\x\|^2\leq (1-C)^k \|x_0-\x\|^2,
        \]
        where 
        \[
            C = \groupp{q_1-q_0-\beta}\groupp{\frac{\s_{q_1-\beta,\min}^2(A)}{(q_1-q_0)q_1 m}+\frac{\s_{q_0-\beta,\min}^2(A)}{(q_1-q_0)q_0q_1 m^2}} - \frac{\s_{\max}^2(A)}{(q_1-q_0)m}\groupp{\frac{2\sqrt{\beta}}{\sqrt{1-q_1-\beta}}+\frac{\beta}{1-q_1-\beta}}.
        \]
        \label{theorem:dqrk}
    \end{restatable}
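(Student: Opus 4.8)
The plan is to prove the one‑step contraction $\E[\|x_{k+1}-\x\|^2\mid x_k]\le(1-C)\|x_k-\x\|^2$ and then iterate with the tower property. Write $r_j:=\langle x_k,a_j\rangle-b_j$ for the residual the algorithm uses and $r_j^t:=\langle x_k-\x,a_j\rangle=r_j+(b_\ve)_j$, so that $(b_\ve)_j=0$ off the corrupted coordinates. Since $A$ is row‑normalized, the \texttt{dqRK} update for the chosen row $i$ is $x_{k+1}=x_k+(b_i-\langle x_k,a_i\rangle)a_i$, and expanding $\|x_{k+1}-\x\|^2$ gives the exact identity $\|x_{k+1}-\x\|^2=\|x_k-\x\|^2-(r_i^t)^2+(b_\ve)_i^2$. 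The index $i$ is drawn uniformly from $S$ (row‑normalization makes $\|a_i\|^2/\|A_S\|_F^2=1/|S|$), and the quantile conventions give $|S|=(q_1-q_0)m$ exactly; taking the conditional expectation and using $(r_i^t)^2-(b_\ve)_i^2=r_i^2+2r_i(b_\ve)_i$ on the at most $\beta m$ corrupted rows $S_c\subseteq S$ (with $(b_\ve)_i=0$ off $S_c$) yields
\[
\E[\|x_{k+1}-\x\|^2\mid x_k]=\|x_k-\x\|^2-\frac{1}{(q_1-q_0)m}\left(\sum_{i\in S}r_i^2+2\sum_{i\in S_c}r_i(b_\ve)_i\right).
\]
It then remains to bound $\sum_{i\in S}r_i^2$ from below (this produces the positive part of $C$) and $\left|\sum_{i\in S_c}r_i(b_\ve)_i\right|$ from above (this produces the subtracted part).

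Two quantile estimates are the engine. For the lower quantile: among the $q_0m$ coordinates with $|r_j|\le Q_0$, at least $(q_0-\beta)m$ are uncorrupted; calling this set $T_0$, one has $\sum_{j\in T_0}r_j^2=\|A_{T_0}(x_k-\x)\|^2\ge\s_{q_0-\beta,\min}^2(A)\|x_k-\x\|^2$ while each term is at most $Q_0^2$, so
\[
Q_0^2\ \ge\ \frac{\s_{q_0-\beta,\min}^2(A)}{q_0m}\,\|x_k-\x\|^2 .
\]
For the upper quantile: among the $(1-q_1)m$ coordinates with $|r_j|>Q_1$, at least $(1-q_1-\beta)m$ are uncorrupted and satisfy $(r_j^t)^2=r_j^2>Q_1^2$; summing and bounding by $\|A(x_k-\x)\|^2\le\s_{\max}^2(A)\|x_k-\x\|^2$ gives $Q_1^2\le\s_{\max}^2(A)\|x_k-\x\|^2/((1-q_1-\beta)m)$. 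The hypotheses $\beta<q_0$, $q_1<1-\beta$, and $q_1-q_0>\beta$ are exactly what keep the relevant index sets (including $S$ with its corrupted rows removed) nonempty with the stated cardinalities; confirming those cardinalities against the multiset quantile conventions is a bookkeeping point to be treated carefully.

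The core of the argument is the lower bound on $\sum_{i\in S}r_i^2$, where the mechanisms of \texttt{qRK} and \texttt{rqRK} are fused. Passing to the uncorrupted rows $S_u:=S\setminus S_c$ (of which there are at least $(q_1-q_0-\beta)m$) gives $\sum_{i\in S}r_i^2\ge\sum_{i\in S_u}r_i^2$, and I would lower‑bound the average $\frac1{|S_u|}\sum_{i\in S_u}r_i^2$ in two complementary ways and recombine. First, a \texttt{qRK}-type bound: $S$ is the top $(q_1-q_0)m$-block of residuals among the $q_1m$ coordinates with $|r_j|\le Q_1$, of which at least $(q_1-\beta)m$ are uncorrupted (call them $S_u'\supseteq S_u$, with $|S_u'|\le q_1m$); since $S_u$ is the top‑block of $S_u'$, its average is at least that of $S_u'$, which is at least $\s_{q_1-\beta,\min}^2(A)\|x_k-\x\|^2/|S_u'|$. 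Second, an \texttt{rqRK}-type bound: every $i\in S$ satisfies $r_i^2>Q_0^2$, so $\frac1{|S_u|}\sum_{i\in S_u}r_i^2>Q_0^2\ge\s_{q_0-\beta,\min}^2(A)\|x_k-\x\|^2/(q_0m)$. Combining these (exploiting that the gap at $Q_0$ between $S_u$ and $T_0$ keeps the top‑block inequality from being tight) and multiplying back by $|S_u|\ge(q_1-q_0-\beta)m$ should yield
\[
\sum_{i\in S}r_i^2\ \ge\ (q_1-q_0-\beta)\left(\frac{\s_{q_1-\beta,\min}^2(A)}{q_1}+\frac{\s_{q_0-\beta,\min}^2(A)}{q_0q_1m}\right)\|x_k-\x\|^2 ,
\]
which after division by $|S|$ is exactly the positive part of $C$. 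The hypothesis that $x_0$ lies on a solution hyperplane enters here exactly as in the analysis of \texttt{rqRK}: it guarantees that $Ax_k-b$ has a zero coordinate at every iterate $k$ (automatic for $k\ge1$, since a Kaczmarz step annihilates the coordinate it projects onto), which is what makes the one‑step bound, hence the recursion, valid already at $k=0$.

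For the cross‑term, Cauchy--Schwarz gives $\left|\sum_{i\in S_c}r_i(b_\ve)_i\right|\le\left(\sum_{i\in S_c}r_i^2\right)^{1/2}\left(\sum_{i\in S_c}(b_\ve)_i^2\right)^{1/2}\le\sqrt{\beta m}\,Q_1\left(\sum_{i\in S_c}(b_\ve)_i^2\right)^{1/2}$, using $r_i^2\le Q_1^2$ on $S$ and $|S_c|\le\beta m$; writing $(b_\ve)_i=r_i^t-r_i$ and applying the triangle inequality together with $\sum_{i\in S_c}(r_i^t)^2\le\s_{\max}^2(A)\|x_k-\x\|^2$ bounds the last factor by $\s_{\max}(A)\|x_k-\x\|+\sqrt{\beta m}\,Q_1$. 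Substituting the upper bound on $Q_1$ and using $m\ge2$ turns $\tfrac{2}{(q_1-q_0)m}\left|\sum_{i\in S_c}r_i(b_\ve)_i\right|$ into at most $\tfrac{\s_{\max}^2(A)}{(q_1-q_0)m}\left(\tfrac{2\sqrt{\beta}}{\sqrt{1-q_1-\beta}}+\tfrac{\beta}{1-q_1-\beta}\right)\|x_k-\x\|^2$, the subtracted part of $C$; assembling the three estimates gives $\E[\|x_{k+1}-\x\|^2\mid x_k]\le(1-C)\|x_k-\x\|^2$, the scalar inequality in the statement is precisely $C>0$, and iterating over $k$ finishes the proof. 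I expect the main obstacle to be the lower bound on $\sum_{i\in S}r_i^2$: because $S$ is the \emph{middle} band of residuals rather than the top band, it need not contain a proportional share of all $m$ rows — so, unlike for \texttt{rqRK}, there is no $\s_{\min}(A)$ contribution — and the two singular‑value terms must be produced by genuinely different devices (enlarging $S_u$ to a size‑$(q_1-\beta)m$ uncorrupted set for one, the lower‑quantile estimate for the other) and then combined with exactly the weights giving the stated $C$, so obtaining these constants sharply (rather than up to an absolute factor) is the delicate step; the secondary difficulties are the cardinality bookkeeping forced by the multiset quantile conventions and verifying that the corruption cannot inflate $Q_0$ and $Q_1$ enough to overwhelm the progress, which is exactly the quantitative content of the hypothesis.
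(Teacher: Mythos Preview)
Your overall plan is sound and close to the paper's: the identity $\|x_{k+1}-\x\|^2=\|x_k-\x\|^2-(r_i^t)^2+(b_\ve)_i^2$, the bounds on $Q_0$ and $Q_1$, and the split into a gain term $\sum_{i\in S}r_i^2$ and a loss term over $S_c$ are all correct. The genuine gap is exactly where you flag it. You produce two \emph{separate} lower bounds on $\frac{1}{|S_u|}\sum_{i\in S_u}r_i^2$---one via the top-block inequality and $\s_{q_1-\beta,\min}$, one via $r_i^2>Q_0^2$---and then assert their sum; but two lower bounds on the same quantity yield only the maximum, and ``the gap at $Q_0$ keeps the top-block inequality from being tight'' is not an argument. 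The paper obtains the additive combination through a telescoping comparison (its Lemma~1/Corollary~1): sorting the squared residuals on the enlarged comparison set as $v_1\le\cdots\le v_{\ell^\ast}$ gives
\[
\frac{1}{|S_u|}\sum_{i\in S_u}r_i^2\ \ge\ \frac{1}{\ell^\ast}\sum_{j=1}^{\ell^\ast}v_j\ +\ \frac{v_{\ell^\ast-|S_u|+1}-v_1}{\ell^\ast},
\]
and it is $v_1=0$ that turns the second summand into $Q_0^2/\ell^\ast$. This exposes a second subtlety your sketch misses: if the row selected at the previous step was corrupted, its residual is zero but that row is not in your uncorrupted set $S_u'$, so $v_1$ need not vanish there. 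The paper fixes this by adjoining the previously selected row to the comparison set regardless of corruption (forcing $v_1=0$), and restricting to uncorrupted rows only when invoking $\s_{q_1-\beta,\min}$. Without this device your combination step does not go through.

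A secondary issue: your Cauchy--Schwarz bound on the cross term, after substituting the $Q_1$ estimate, produces $\frac{2\sqrt{\beta}}{\sqrt{1-q_1-\beta}}+\frac{2\beta}{1-q_1-\beta}$, an extra factor of $2$ on the second term that ``$m\ge 2$'' does not remove. To match the stated constant, use instead $(r_i^t)^2-(b_\ve)_i^2=2r_ir_i^t-r_i^2\ge-2|r_i^t|Q_1-Q_1^2$ for $i\in S_c$ and bound $\sum_{i\in S_c}|r_i^t|\le\sqrt{|S_c|}\,\s_{\max}(A)\|x_k-\x\|$; this is essentially how the paper's Corollary~2 (inherited from Steinerberger) gets the sharp constant.
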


    The reader is encouraged to note the similarity between Theorem \ref{theorem:dqrk} and the analog convergence results for \texttt{qRK}:
    \begin{theorem}
        [\cite{qRK}, Main Theorem]
        Suppose $A$ is row-normalized, over-determined, and full-rank. Let $\x$ be the solution to the system $Ax=b_t$ and $b=b_t+b_\ve$ where $\|b_\ve\|_0\leq \beta m$. Suppose $\beta < q<1-\beta$, and 
        \[
            \frac{q}{q-\beta}\groupp{\frac{2\sqrt{\beta}}{\sqrt{1-q-\beta}}+\frac{\beta}{1-q-\beta}}<\frac{\s_{q-\beta,\min}(A)^2}{\s_{\max}^2(A)}.
        \]
        Then the expected error of the \texttt{qRK} algorithm decays as
        \[
            \E\groupb{\|x_k-\x\|^2}\leq (1-C)^k\|x_0-\x\|^2,
        \]
        where
        \[
            C=(q-\beta)\frac{\s_{q-\beta,\min}(A)^2}{q^2m}-\frac{\s_{\max}^2(A)}{qm}\groupp{\frac{2\sqrt{\beta}}{\sqrt{1-q-\beta}}+\frac{\beta}{1-q-\beta}}.
        \]
    \end{theorem}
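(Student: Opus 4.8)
The plan is to track the squared error and prove a one-step contraction in conditional expectation, then iterate. Write $T\Def\groupc{i\in[m]:(b_\ve)_i\neq 0}$ for the (adversarial) set of corrupted rows, so $|T|\leq\beta m$ and $b_i=\lll\x,a_i\rrr$ for every $i\notin T$. Since $A$ is row-normalized, the \texttt{qRK} update is $x_{k+1}=x_k+\groupp{b_i-\lll x_k,a_i\rrr}a_i$ and $i$ is drawn uniformly from $S$, each index with probability $1/(qm)$ because $\|A_S\|_F^2=|S|=qm$. First I would set $r_i\Def b_i-\lll x_k,a_i\rrr=(b_\ve)_i-\lll x_k-\x,a_i\rrr$ and expand $\|x_{k+1}-\x\|^2=\|x_k-\x\|^2+2r_i\lll x_k-\x,a_i\rrr+r_i^2$; using $(b_\ve)_i=r_i+\lll x_k-\x,a_i\rrr$ the last two terms collapse to the difference of squares $(b_\ve)_i^2-\lll x_k-\x,a_i\rrr^2$, giving the exact identity $\|x_{k+1}-\x\|^2=\|x_k-\x\|^2-\lll x_k-\x,a_i\rrr^2+(b_\ve)_i^2$, valid for every $i\in[m]$.

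Averaging this identity over the uniform draw from $S$ yields
\[
\E\groupb{\|x_{k+1}-\x\|^2\mid x_k}=\|x_k-\x\|^2-\frac{1}{qm}\sum_{i\in S}\lll x_k-\x,a_i\rrr^2+\frac{1}{qm}\sum_{i\in S\cap T}(b_\ve)_i^2,
\]
so the task splits into a progress lower bound and a corruption upper bound. For the progress term I would keep only the uncorrupted indices: since $|S|=qm$ and $|S\cap T|\leq\beta m$, the set $S\sm T$ has at least $(q-\beta)m$ elements, and for any fixed $I\sset S\sm T$ with $|I|=(q-\beta)m$ the definition of $\s_{q-\beta,\min}(A)$ gives $\sum_{i\in I}\lll x_k-\x,a_i\rrr^2=\|A_I(x_k-\x)\|^2\geq\s_{q-\beta,\min}^2(A)\|x_k-\x\|^2$. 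To land on the stated constant exactly I would organize this by conditioning on whether the drawn row is clean: the probability of a clean draw is $|S\sm T|/(qm)\geq(q-\beta)/q$, the conditional expected progress is at least $\s_{q-\beta,\min}^2(A)\|x_k-\x\|^2/|S\sm T|\geq\s_{q-\beta,\min}^2(A)\|x_k-\x\|^2/(qm)$, and the product reproduces the term $(q-\beta)\s_{q-\beta,\min}^2(A)/(q^2m)$.

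The delicate part is the corruption term, which requires controlling the quantile threshold $Q$. The key observation is that every row outside $S$ has residual exceeding $Q$: there are $(1-q)m$ such rows and at most $\beta m$ of them lie in $T$, so at least $(1-q-\beta)m$ \emph{uncorrupted} rows satisfy $\groupabs{\lll x_k-\x,a_i\rrr}>Q$. Summing their squares and comparing against $\sum_{i\notin T}\lll x_k-\x,a_i\rrr^2\leq\s_{\max}^2(A)\|x_k-\x\|^2$ forces $Q^2<\s_{\max}^2(A)\|x_k-\x\|^2/\groupp{(1-q-\beta)m}$. Then, for a corrupted $i\in S$, I would bound $(b_\ve)_i^2-\lll x_k-\x,a_i\rrr^2=r_i^2+2r_i\lll x_k-\x,a_i\rrr\leq Q^2+2Q\groupabs{\lll x_k-\x,a_i\rrr}$ (using $\groupabs{r_i}\leq Q$ on $S$), sum over the at most $\beta m$ corrupted indices, apply Cauchy--Schwarz with $\sum_{i\in S\cap T}\lll x_k-\x,a_i\rrr^2\leq\s_{\max}^2(A)\|x_k-\x\|^2$, and substitute the bound on $Q^2$; the $\beta m Q^2$ piece produces $\frac{\beta}{1-q-\beta}$ and the cross piece produces $\frac{2\sqrt{\beta}}{\sqrt{1-q-\beta}}$, both scaled by $\s_{\max}^2(A)\|x_k-\x\|^2$.

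Assembling the two bounds gives $\E\groupb{\|x_{k+1}-\x\|^2\mid x_k}\leq(1-C)\|x_k-\x\|^2$ with $C$ exactly as stated, and the hypothesis relating $\s_{q-\beta,\min}^2(A)/\s_{\max}^2(A)$ to $\frac{q}{q-\beta}\groupp{\frac{2\sqrt\beta}{\sqrt{1-q-\beta}}+\frac{\beta}{1-q-\beta}}$ is precisely the statement that $C>0$. Taking total expectations and iterating (the tower property) then delivers $\E\groupb{\|x_k-\x\|^2}\leq(1-C)^k\|x_0-\x\|^2$. I expect the main obstacle to be the corruption estimate: because $S$ depends jointly on the current iterate and on the adversarial corruption pattern, no singular-value bound can be applied to $S$ directly, so the argument rests entirely on the counting step that pins down how many uncorrupted rows must fall above the threshold $Q$, together with the careful handling of the $r_i^2+2r_i\lll x_k-\x,a_i\rrr$ cross terms needed to reproduce the $\sqrt\beta$ and $\beta$ contributions in the stated form.
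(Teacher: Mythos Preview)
Your proposal is correct and follows essentially the same route as Steinerberger's original argument (which the paper cites rather than reproves, but whose structure is visible in the paper's Corollaries~\ref{cor:Q1boundcor}--\ref{cor:expectedcorrupterrbound}, Lemma~\ref{lem:lemdqrk}, and the proof of Theorem~\ref{theorem:dqrk}). The ingredients are the same: the quantile bound $Q^2\leq\s_{\max}^2(A)\|x_k-\x\|^2/\groupp{(1-q-\beta)m}$ obtained by counting at least $(1-q-\beta)m$ uncorrupted residuals above $Q$; the progress bound via $\s_{q-\beta,\min}$ on the clean indices of $S$; and the cross-term estimate $r_i^2+2r_i\lll x_k-\x,a_i\rrr$ handled by Cauchy--Schwarz to produce the $\sqrt\beta$ and $\beta$ contributions.

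The only organizational difference is that the paper (following \cite{qRK}) packages the clean and corrupt parts as separate lemmas bounding $\E_{i\in B\sm S}\|x_{k+1}-\x\|^2$ and $\E_{i\in S}\|x_{k+1}-\x\|^2$, then combines them as a convex combination and maximizes over $|S|\leq\beta m$, whereas you start from the exact identity $\|x_{k+1}-\x\|^2=\|x_k-\x\|^2-\lll x_k-\x,a_i\rrr^2+(b_\ve)_i^2$ and split the averaged sum directly. Your direct route actually shows that the progress term alone already gives $\s_{q-\beta,\min}^2/(qm)$ rather than $(q-\beta)\s_{q-\beta,\min}^2/(q^2m)$; you correctly weaken this by bounding the probability and conditional progress separately to match the stated constant, which is exactly what the lemma-based organization forces. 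Either way the argument is the same, and your identification of the counting step for $Q$ as the crux is on target.
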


\section{Analysis of the Reverse-Quantile Random Kaczmarz Method}
    \label{sec:rqrkanalysis}
    In providing analysis of the convergence of \texttt{rqRK}, we begin Section \ref{sec:rqrkanalysis} by providing two self-contained results in Section \ref{sec:usefulresults} useful to characterizing the bound on the expectation of $\|x_k-\x\|^2$ under the \texttt{rqRK} scheme based on known results about \texttt{RK}. In Section \ref{sec:mainresult}, these results will be used to provide a concrete bound on the expected error at the $k$-th iteration of \texttt{rqRK} only using a bound on the expected error of the $k$-th iteration of \texttt{RK} and some information given by the quantile parameter. 
    
    \subsection{Useful Results}
        \label{sec:usefulresults}
        The following useful results extend beyond the scope of random numerical linear algebra; in fact, they stand alone as general probability results.
        \begin{lem}
            Let $X$ and $Y$ be random variables that assume values from $\{1,\dots,n\}$ and $\{\ell+1,\dots,n\}$, respectively, and $\PP(X=k)\neq 0$ for all $k\in[n]$. Furthermore, suppose that $Y$ assumes the value $k$ with probability $q_k = \PP(X=k\mid X\neq 1,\dots,\ell)$. Let $f:[n]\ra \R$ be non-decreasing. Then \[\E[f(Y)]\geq \E[f(X)]+(f(\ell+1)-f(\ell))\groupp{\sum_{k=1}^\ell \PP(X=k)}.\]
            Moreover, if there are at least two distinct elements in the range of $f$,
            \[
                \E[f(Y)]>\E[f(X)].
            \]
            \label{lem:auxlemma}
        \end{lem}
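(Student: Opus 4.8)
The plan is to couple $X$ and $Y$ directly through their distributions and exploit monotonicity of $f$. First I would observe that for $k \in \{\ell+1, \dots, n\}$ we have $q_k = \PP(X = k) / \PP(X \neq 1, \dots, \ell) = \PP(X=k)/p$, where $p \Def \sum_{k=\ell+1}^n \PP(X=k) = 1 - \sum_{k=1}^\ell \PP(X=k)$. Writing $r \Def \sum_{k=1}^\ell \PP(X=k) = 1 - p$, the quantity to control is
\[
    \E[f(Y)] - \E[f(X)] = \sum_{k=\ell+1}^n f(k)\groupp{\frac{\PP(X=k)}{p} - \PP(X=k)} - \sum_{k=1}^\ell f(k)\PP(X=k) = \frac{r}{p}\sum_{k=\ell+1}^n f(k)\PP(X=k) - \sum_{k=1}^\ell f(k)\PP(X=k).
\]

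Next I would bound each piece using that $f$ is non-decreasing: on the first sum, $f(k) \geq f(\ell+1)$ for every $k \geq \ell+1$, so $\sum_{k=\ell+1}^n f(k)\PP(X=k) \geq f(\ell+1)\, p$; on the second sum, $f(k) \leq f(\ell)$ for every $k \leq \ell$, so $\sum_{k=1}^\ell f(k)\PP(X=k) \leq f(\ell)\, r$. Combining, $\E[f(Y)] - \E[f(X)] \geq \frac{r}{p}\cdot f(\ell+1) p - f(\ell) r = r\groupp{f(\ell+1) - f(\ell)}$, which is exactly the claimed inequality since $r = \sum_{k=1}^\ell \PP(X=k)$.

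For the strict inequality under the hypothesis that $f$ takes at least two distinct values, I would argue that at least one of the two bounds above is strict. Since $f$ is non-decreasing and non-constant, $f(1) < f(n)$, hence $f(\ell) < f(\ell+1)$ OR ($f(\ell) = f(\ell+1)$ but then the jump occurs strictly below $\ell$ or strictly above $\ell+1$). In the first case $r\groupp{f(\ell+1)-f(\ell)} > 0$ already gives $\E[f(Y)] > \E[f(X)]$ provided $r > 0$, which holds because $\PP(X=1) \neq 0$. In the remaining case $f(\ell+1) = f(\ell)$, so it suffices to show that one of the two displayed inequalities is strict: either $f(k_0) > f(\ell+1)$ for some $k_0 > \ell+1$ with $\PP(X=k_0) > 0$ (making the first bound strict, using $\PP(X=k_0)>0$), or else $f$ is constant on $\{\ell+1,\dots,n\}$, forcing the non-constancy to live on $\{1,\dots,\ell\}$, so $f(k_1) < f(\ell)$ for some $k_1 < \ell$ with $\PP(X=k_1) > 0$, making the second bound strict. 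Either way the difference is strictly positive.

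The only mildly delicate point is the bookkeeping in the strict case — making sure every jump of $f$ is accounted for and that the relevant point mass $\PP(X=k)$ is nonzero (which is guaranteed by hypothesis). Everything else is a one-line rearrangement plus two applications of monotonicity, so I do not anticipate a genuine obstacle.
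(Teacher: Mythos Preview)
Your proposal is correct and follows essentially the same route as the paper: both compute $\E[f(Y)]-\E[f(X)]$ explicitly, factor it as $\frac{r}{p}\sum_{k>\ell}p_k f(k)-\sum_{k\le\ell}p_k f(k)$, and then apply the two monotonicity bounds $f(k)\ge f(\ell+1)$ for $k>\ell$ and $f(k)\le f(\ell)$ for $k\le\ell$ to collapse the expression to $r\bigl(f(\ell+1)-f(\ell)\bigr)$. Your strict-inequality argument is also the same case split as the paper's (jump at $\ell$, below $\ell$, or above $\ell$), just reorganized around whether $f(\ell)=f(\ell+1)$; the use of $\PP(X=k)\neq 0$ to guarantee the strict term carries positive mass is exactly what the paper relies on implicitly.
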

        
        \begin{proof}
            To simplify notation, let $p_k=\PP(X=k)$ and let $x_k = f(k)$. Then $q_k=\frac{\PP(X=k,X\neq 1,\dots, \ell)}{\PP(X\neq 1,\dots, \ell)}=\frac{p_k}{1-(p_1+\cdots+p_\ell)}$ for $k>\ell$. Then we have 
            \begin{align}
                \E[f(Y)]-\E[f(X)] &= \sum_{k=\ell+1}^n q_k x_k -\sum_{k=1}^n p_k x_k \nonumber \\
                            &= \frac{1}{1-(p_1+\cdots+p_\ell)} \sum_{k=\ell+1}^n p_k x_k - \sum_{k=1}^n p_k x_k \nonumber \\
                            &= \groupp{\frac{1}{1-(p_1+\cdots+p_\ell)}-1}\sum_{k=\ell+1}^n p_k x_k - \sum_{k=1}^\ell p_kx_k \nonumber \\
                            &\geq x_{\ell+1} \groupp{\frac{1}{1-(p_1+\cdots+p_\ell)}-1}\sum_{k=\ell+1}^n p_k - x_\ell \groupp{\sum_{k=1}^\ell p_k}   \label{eq:ineq} \\
                            &= x_{\ell+1} \groupp{\sum_{k=1}^\ell p_k} -x_\ell \groupp{\sum_{k=1}^\ell p_k} \nonumber \\
                            &=(x_{\ell+1}-x_\ell)\groupp{\sum_{k=1}^\ell p_k}. \nonumber
            \end{align}  
            Now consider the case where there are at least two distinct elements in the range of $f$. Let $k$ be some index for which $x_k<x_{k+1}$. If $k=\ell$, then we trivially have $\E[f(Y)]-\E[f(X)]>0$ from the last equality. If $k<\ell$, the first inequality \eqref{eq:ineq} above is strict because $x_j<x_\ell$ for some $j<\ell$ and the right summand contains $x_j$. If $k>\ell$, then the same inequality is strict because $x_{\ell+1}<x_{k+1}$ and the left summand contains the term $x_{k+1}$.
        \end{proof}
        We can provide a more useful bound by repeatedly applying Lemma \ref{lem:auxlemma} in the case that $\ell=1$.
        \begin{cor}
            Given $X$, $Y$, and $f$ as in Lemma \ref{lem:auxlemma},  
            \[
                \E[f(Y)] \geq \E[f(X)] + \sum_{k=1}^{\ell} \Big(f(k+1)-f(k)\Big)\PP\groupp{X=k\mid X\not\in [k-1]},
            \]
            with the convention that $\PP(X=k\mid X\not\in [k-1])$ for $k=1$ is $\PP(X=1)$. 
            \label{cor:maincorollary}
        \end{cor}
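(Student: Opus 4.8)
The plan is to iterate Lemma \ref{lem:auxlemma} with cut-off parameter $1$ along a chain of random variables that interpolates between $X$ and $Y$. Concretely, I would set $Z_0 \Def X$ and, for each $j\in\{1,\dots,\ell\}$, let $Z_j$ be the random variable supported on $\{j+1,\dots,n\}$ with $\PP(Z_j=k)=\PP\groupp{X=k\mid X\notin[j]}$; these point masses are all nonzero because $\PP(X=k)\neq 0$ for every $k\in[n]$, and $Z_\ell$ has exactly the law of $Y$. The key structural observation is that, for each $j$, the law of $Z_j$ is the law of $Z_{j-1}$ conditioned on $Z_{j-1}\neq j$, i.e. on $Z_{j-1}$ not taking the smallest value in its support. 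Thus the pair $(Z_{j-1},Z_j)$ is of exactly the form covered by Lemma \ref{lem:auxlemma} — after shifting indices so that the support starts at $1$ — with $\ell=1$.

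Next, I would apply Lemma \ref{lem:auxlemma} (in the translated form, replacing $f$ by $t\mapsto f(t+j-1)$ and $Z_{j-1}$ by $Z_{j-1}-(j-1)$; equivalently, one checks that the proof of the lemma uses only the contiguous-support structure and positivity of the point masses, both unaffected by translation) to each consecutive pair. This yields, for every $j\in\{1,\dots,\ell\}$,
\[
    \E[f(Z_j)] \geq \E[f(Z_{j-1})] + \groupp{f(j+1)-f(j)}\PP\groupp{Z_{j-1}=j}.
\]
Since $\PP(Z_{j-1}=j)=\PP\groupp{X=j\mid X\notin[j-1]}$ (the $j=1$ case reading $\PP(X=1)$, which matches the stated convention), summing these inequalities over $j=1,\dots,\ell$ telescopes the expectations on the two sides to give
\[
    \E[f(Y)]=\E[f(Z_\ell)] \geq \E[f(Z_0)] + \sum_{j=1}^{\ell}\groupp{f(j+1)-f(j)}\PP\groupp{X=j\mid X\notin[j-1]},
\]
which is the claimed bound once we recall $Z_0=X$.

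The only point requiring care — and the main (mild) obstacle — is justifying the application of Lemma \ref{lem:auxlemma} to $Z_{j-1}$, whose support is $\{j,\dots,n\}$ rather than $\{1,\dots,n\}$; this is handled by the translation argument above. One should also note in passing that $\ell\leq n-1$ (forced by $Y$ being supported on the nonempty set $\{\ell+1,\dots,n\}$), so that $f(j+1)$ is defined and each $Z_{j-1}$ is a genuine, non-degenerate random variable throughout the recursion. Everything else is a routine telescoping sum.
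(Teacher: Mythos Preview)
Your proposal is correct and follows essentially the same approach as the paper: define an interpolating chain $Z_0=X,\,Z_1,\dots,Z_\ell=Y$ by successively conditioning away the smallest remaining value, apply Lemma~\ref{lem:auxlemma} with cut-off $1$ to each consecutive pair, and telescope. The only differences are cosmetic (the paper indexes the chain starting at $Z_1$ rather than $Z_0$), and you are slightly more explicit than the paper about the translation needed to apply the lemma when the support starts at $j$ rather than $1$.
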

        \begin{proof}
            We use the earlier convention where $x_k=f(k)$. We also define $Z_1\Def X$ and let $Z_j$ assume values $[n]\sm\{1,\dots, j-1\}$  with probability $\PP(Z_j=k)=\PP(Z_{j-1}=k\mid Z_{j-1}\neq j-1)$. In this case, we can apply the lemma to each pair $Z_{j+1}$ and $Z_{j}$ to see $\E[Z_{j+1}]-\E[Z_{j}]\geq (x_{j+1}-x_{j})\PP(Z_{j}=j)$. It is not difficult to see $Y$ and $Z_{\ell+1}$ are identically distributed, so we have
            \begin{align*}
                \E[f(Y)]-\E[f(X)] &= \E[f(Y)]-\E[f(Z_\ell)]+\sum_{j=1}^{\ell-1}\E[f(Z_{j+1})]-\E[f(Z_{j})] \\
                            &= \sum_{j=1}^{\ell}\E[f(Z_{j+1})]-\E[f(Z_{j})] \\
                            &\geq \sum_{j=1}^{\ell}(x_{j+1}-x_j)\PP(Z_j=j).
            \end{align*}
        \end{proof}
    
    \subsection{The Main Result}
        \label{sec:mainresult}
        Here, we provide some notations that are useful for the proof of Theorem \ref{theorem:rqrk}. We define $X_k$ to be the random variable that assumes index $i$ with probability $\frac{\|a_i\|^2}{\|A\|_F^2}$. Similarly we define $Y_k$ to be the random variable that assumes index $i\in S$ with probability $\frac{\|a_i\|^2}{\|A_S\|_F^2}$ where $S$ is given by 
        \[
            S=\Big\{j\in [m]: \groupabs{\lll x_{k-1},a_j\rrr-b_j}>q\quant\big(\groupc{\groupabs{\lll x_{k-1},a_j\rrr-b_j}:j\in [m]}\big)\Big\}.
        \]
        In other words, $X_k$ and $Y_k$ choose index at iteration $k$ in accordance with \texttt{RK} and \texttt{rqRK}, respectively. The error $x_k-\x$ is also dependent on whether we use \texttt{RK} or \texttt{rqRK} to choose our next row, so we will let $e_{X,k}$, $e_{Y,k}$ denote the error random variable of our $k$-th iteration when \texttt{RK}, \texttt{rqRK}, respectively, is used to choose our row. Lastly, we will use $\E_k$ to denote the expectation given that the first $k-1$ chosen rows are fixed. If we need to specify the first $k-1$ rows chosen were $s_1,\dots, s_{k-1}$, we will then denote the expectation by $\E_{(s_1,\dots,s_{k-1})}$. In the case that the previous rows were fixed and it does not matter whether we used \texttt{RK} or \texttt{rqRK}, we will simply use $e_{k}$. We remind the reader that $\a_i$ denotes $\frac{a_i}{\|a_i\|}$. For convenience, we restate the main theorem here before proving it. 
        \mainresult*
        
        \begin{proof}
            We first note that as in Strohmer and Vershynin's \cite{RK} work, we have 
            \[
                \|x_k-\x\|^2=\groupp{1-\l|\l\lll \frac{x_{k-1}-\x}{\|x_{k-1}-\x\|}, \a_{Y_k}\r\rrr\r|^2}\|x_{k-1}-\x\|^2.
            \]
            This equality is obtained using Pythagorean's theorem and the definition of the iterates $x_k$. In particular, $\x-x_k$ and $x_{k-1}-x_k$ are orthogonal, so 
            \[
                \|x_k-\x\|^2 = \|x_{k-1}-\x\|^2-\|x_{k-1}-x_{k}\|^2,
            \]
            and, since $A\x=b$,
            \begin{align*}
                \|x_{k}-x_{k-1}\|^2&=\left\| x_{k-1} + \frac{b_{Y_k} - \lll x_{k-1} , a_{Y_k}\rrr }{\|a_{Y_k}\|^2}a_{Y_k}-x_{k-1}\right\|^2  \\
                &=\left\|\lll \x-x_{k-1}, \a_{Y_k}\rrr \a_{Y_k}\right\|^2\\
                &=\groupabs{\lll \x-x_{k-1}, \a_{Y_k}\rrr }^2.
            \end{align*}
            As such, using the \texttt{rqRK} scheme, we have
            \begin{equation}
                \E_k\groupb{\|x_k-\x\|^2}=\groupp{1-\E_k\groupb{\l|\l\lll \frac{x_{k-1}-\x}{\|x_{k-1}-\x\|}, \a_{Y_k}\r\rrr\r|^2}}\|x_{k-1}-\x\|^2. \label{eq:thm1eq1}
            \end{equation}
            In light of this, we will focus our efforts on bounding 
            \[
                \E_k\groupb{\l|\l\lll \frac{e_{k-1}}{\|e_{k-1}\|}, \a_{Y_k}\r\rrr\r|^2}.
            \]
            To do so, we will use Corollary~\ref{cor:maincorollary}. Let $f_k= g_k\circ \s$, where 
            \[
                g_k(j)=\l|\l\lll \frac{e_{k-1}}{\|e_{k-1}\|}, \a_j\r\rrr\r|^2,
            \]
            and $\s$ is a permutation of $[m]$ such that $g_k(\s(1)),\dots, g_k(\s(m))$ is non-decreasing. Then, by Corollary~\ref{cor:maincorollary},
            \begin{equation}
                \E_k\groupb{\l|\l\lll \frac{e_{k-1}}{\|e_{k-1}\|}, \a_{Y_k}\r\rrr\r|^2} \geq \E_k\groupb{\l|\l\lll \frac{e_{k-1}}{\|e_{k-1}\|}, \a_{X_k}\r\rrr\r|^2}  + \sum_{j=1}^{qm}\Big(f_k(j+1)-f_k(j)\Big)\PP\groupp{X_k = j\mid X_k\not\in[j-1]}, \label{eq:corollary_ineq}
            \end{equation}
            with the convention that $\PP(X_k = j\mid X_k\not\in [j-1])$ for $j=1$ is $\PP(X_k = 1)$. For the first term on the right-hand side of the inequality (\ref{eq:corollary_ineq}), we note that
            \begin{equation}
                \E_k\groupb{\l|\l\lll \frac{e_{k-1}}{\|e_{k-1}\|}, \a_{X_k}\r\rrr\r|^2} = \sum_{j=1}^m \frac{\|a_j\|^2}{\|A\|_F^2}\groupabs{\l\lll \frac{e_{k-1}}{\|e_{k-1}\|}, \a_j\r\rrr}^2 \geq \frac{\s_{\min}^2(A)}{\|A\|_F^2}.
                \label{eq:RKbound}
            \end{equation}
            Now consider the second term on the right-hand side of (\ref{eq:corollary_ineq}). We let 
            \[
                Q=q\quant\groupp{\groupc{\frac{1}{\|a_j\|}\l|\l\lll x_{k-1},a_j\r\rrr-b_j\r|}_{j=1}^m}.
            \]
            Since the system is consistent, $\l|\l\lll x_{k-1},a_j\r\rrr-b_j\r|=|\lll e_{k-1},a_j\rrr|$. Letting $i \in [m]$ be such that $f_k(qm+1)=\l|\l\lll \frac{e_{k-1}}{\|e_{k-1}\|}, \a_i\r\rrr\r|^2$, we have 
            \begin{equation}
                f_k(qm+1)=\frac{1}{\|e_{k-1}\|^2\|a_i\|^2}\l|\l\lll e_{k-1}, a_i\r\rrr\r|^2\geq \frac{Q^2}{\|e_{k-1}\|^2}.
                \label{eq:fkbound}
            \end{equation}
            Moreover, $\l|\l\lll \frac{e_{k-1}}{\|e_{k-1}\|}, \a_j\r\rrr\r|=0$ for some $j$ because the $(k-1)$-th iterate was chosen as a projection of the $(k-2)$-th iterate onto some hyperplane of the form $\{x:\lll x, a_j\rrr=b_j\}$. This ensures $f_k(1)=0$. Letting $N=\groupc{j:\l|\l\lll e_{k-1}, \a_j\r\rrr\r|\leq Q}$, by  choice of $Q$, it is clear $|N|=qm$ and we have 
            \begin{equation}
                \begin{split}
                qm Q^2 &\geq \sum_{j\in N} \l|\l\lll e_{k-1}, \a_j\r\rrr\r|^2 \\
                &\geq \frac{1}{\max_{1\leq j \leq m}\{\|a_j\|^2\}}\sum_{j\in N} \l|\l\lll e_{k-1}, a_j \r\rrr\r|^2 \\
                &\geq \frac{\s_{\min}^2 (A_N) \|e_{k-1}\|^2}{\max_{1\leq j \leq m}\{\|a_j\|^2\}}\\
                &\geq \frac{\s_{q,\min}^2(A)\|e_{k-1}\|^2}{\max_{1\leq j \leq m}\{\|a_j\|^2\}}.
                \end{split}
                \label{eq:qmQ2bound}
            \end{equation}

            We arrive at the following:
            \begin{align}
                \E_k\groupb{\l|\l\lll \frac{e_{k-1}}{\|e_{k-1}\|}, \a_{Y_k}\r\rrr\r|^2} &\geq \E_k\groupb{\l|\l\lll \frac{e_{k-1}}{\|e_{k-1}\|}, \a_{X_k}\r\rrr\r|^2} + \left(\sum_{j=1}^{qm}f_k(j+1)-f_k(j)\right)\min_{1\leq j\leq m}\groupc{\frac{\|a_j\|^2}{\|A\|_F^2}} \label{eq:hypothesis_note}\\
                &\geq \frac{\s_{\min}^2(A)}{\|A\|_F^2} +\frac{Q^2}{\|e_{k-1}\|^2} \min_{1\leq j\leq m}\groupc{\frac{\|a_j\|^2}{\|A\|_F^2}} \nonumber \\
                &\geq \frac{\s_{\min}^2(A)}{\|A\|_F^2} +\frac{\s_{q,\min}^2(A)}{qm}\frac{\min_{1\leq j\leq m}\groupc{\frac{\|a_j\|^2}{\|A\|_F^2}}}{\max_{1\leq j \leq m}\{\|a_j\|^2\}}, \nonumber
            \end{align}
            where the first inequality follows from (\ref{eq:corollary_ineq}) and the bound $\PP(X_k = j \mid X_k\not\in[j-1])\geq \PP(X_k=j)=\|a_j\|^2/\|A\|_F^2$, the second follows from (\ref{eq:RKbound}) and (\ref{eq:fkbound}), and the last follows from (\ref{eq:qmQ2bound}).

            Consequently, 
            \[
                \E_k\groupb{\|x_k-\x\|^2}\leq \groupp{1-\frac{\s_{\min}^2(A)}{\|A\|_F^2} -\frac{\s_{q,\min}^2(A)}{qm}\frac{\min_{1\leq j\leq m}\groupc{\frac{\|a_j\|^2}{\|A\|_F^2}}}{\max_{1\leq j \leq m}\{\|a_j\|^2\}} }\|x_{k-1}-\x\|^2.
            \]
            Temporarily letting 
            \[
                \alpha=\groupp{1-\frac{\s_{\min}^2(A)}{\|A\|_F^2} -\frac{\s_{q,\min}^2(A)}{qm}\frac{\min_{1\leq j\leq m}\groupc{\frac{\|a_j\|^2}{\|A\|_F^2}}}{\max_{1\leq j \leq m}\{\|a_j\|^2\}} },
            \]
            we see
            \begin{align*}
                \E\groupb{\|e_{Y,k}\|^2} &\leq \sum \E_{(n_1,\dots,n_{k-1})}\groupb{\|e_{Y,k}\|^2} \PP\big((Y_1,\dots, Y_{k-1})=(n_1,\dots,n_{k-1})\big) \\
                &\leq \alpha \sum \|e_{k-1}\|^2 \PP\big((Y_1,\dots, Y_{k-1})=(n_1,\dots,n_{k-1})\big) \\
                &= \alpha \E\groupb{\|e_{Y,k-1}\|^2},
            \end{align*}
            where the sum is taken over all possible combinations $(n_1,\dots,n_{k-1})$ of indices of rows. It is then clear by induction
            \[
                \E\groupb{\|e_{Y,k}\|^2}< \groupp{1-\frac{\s_{\min}^2(A)}{\|A\|_F^2} -\frac{\s_{q,\min}^2(A)}{qm}\frac{\min_{1\leq j \leq m}\groupc{\frac{\|a_j\|^2}{\|A\|_F^2}}}{\max_{1\leq j \leq m}\{\|a_j\|^2\}} }^k \|e_0\|^2.
            \]
        \end{proof}

        If we do not assume $x_0$ lies on a solution hyperplane, then the right hand side of (\ref{eq:hypothesis_note}) may only be bounded below by 
        \[
            \frac{\s_{\min}^2(A)}{\|A\|_F^2}
        \]
        when $k=1$. This is because $f_1(qm+1)$ may equal $f_1(1)$ if $x_0$ is not necessarily on a solution hyperplane. We can artificially construct a scenario where this happens by letting $A$ be a full-rank row-normalized matrix and choosing $x_0$ to be a point equidistant from all the solution hyperplanes. If we remove the hypothesis that $x_0$ lies on a solution hyperplane, it is easy to see our new result becomes 
        \[
            \E\groupb{\|x_k-\x\|^2}\leq \groupp{1-\frac{\s_{\min}^2(A)}{\|A\|_F^2} -\frac{\s_{q,\min}^2(A)}{qm}\frac{\min_{1\leq j\leq m}\groupc{\frac{\|a_j\|^2}{\|A\|_F^2}}}{\max_{1\leq j \leq m}\{\|a_j\|^2\}} }^{k-1}\groupp{1-\frac{\s_{\min}^2(A)}{\|A\|_F^2}}\|x_0-\x\|^2.
        \]
        In particular, for row-normalized matrices, this simplifies to
        \[
            \E\groupb{\|x_k-\x\|^2}\leq \groupp{1-\frac{\s_{\min}^2(A)}{m}-\frac{\s_{q,\min}^2(A)}{qm^2}}^{k-1}\groupp{1-\frac{\s_{\min}^2(A)}{m}}\|x_0-\x\|^2.
        \]
        
        As noted after the statement of Theorem \ref{theorem:rqrk} in Section \ref{sec:contributions}, this bound is not an improvement on that of \texttt{RK} in the case that $qm<n$ since $\sigma_{q,\min}(A) = 0$. However, in this case, we still see from Lemma \ref{lem:auxlemma} that as long as $x_{k-1}\neq \x$, we have 
        \[
            \E_k\groupb{\l|\l\lll \frac{e_{k-1}}{\|e_{k-1}\|}, \a_{Y_k}\r\rrr\r|^2} > \E_k\groupb{\l|\l\lll \frac{e_{k-1}}{\|e_{k-1}\|}, \a_{X_k}\r\rrr\r|^2}.
        \]
        Note this expectation is taken assuming the first $k-1$ elected rows are fixed. Using another collection of $k-1$ rows, it might be the case that $x_{k-1}=\x$. In any case, as long as there is one collection $s_1,\dots,s_{k-1}$ rows for which $x_{s_{k-1}}\neq \x$, we have \[\E_{(s_1,\dots,s_{k-1})}\groupb{\l|\l\lll \frac{e_{k-1}}{\|e_{k-1}\|}, \a_{Y_k}\r\rrr\r|^2} > \E_{(s_1,\dots,s_{k-1})}\groupb{\l|\l\lll \frac{e_{k-1}}{\|e_{k-1}\|}, \a_{X_k}\r\rrr\r|^2},\] and thus,
        \begin{align*}
            \E\groupb{\|e_{Y,k}\|^2} &\leq \sum \E_{(s_1,\dots,s_{k-1})}\groupb{\|e_{Y,k}\|^2} \PP\big((Y_1,\dots, Y_{k-1})=(s_1,\dots,s_{k-1})\big) \\
                &< \sum \E_{(s_1,\dots,s_{k-1})}\groupb{\|e_{X,k}\|^2} \PP\big((Y_1,\dots, Y_{k-1})=(s_1,\dots,s_{k-1})\big) \\
                &\leq \sum \groupp{1-\frac{\s_{\min}^2(A)}{\|A\|_F^2}}\|e_{k-1}\|^2 \PP\big((Y_1,\dots, Y_{k-1})=(s_1,\dots,s_{k-1})\big)\\
                &=\groupp{1-\frac{\s_{\min}^2(A)}{\|A\|_F^2}}\E\groupb{\|e_{Y,k-1}\|^2},
        \end{align*}
        where the sum is taken over all possible combinations of rows $s_1,\dots, s_{k-1}$. Then we have 
        \[
            \E\groupb{\|e_{Y,k}\|^2}<\groupp{1-\frac{\s_{\min}^2(A)}{\|A\|_F^2}}^k\|e_0\|^2.
        \]
        Perhaps more importantly, this makes it clear that at the $k$-th step with the same $x_{k-1}$ value, \texttt{rqRK} is expected to perform strictly better than \texttt{RK}.
        
        In setting the parameter $q$ to the largest possible value $(m-1)/m$ forces \texttt{rqRK} to choose the index corresponding to the largest element of the set $\{|\lll x_k,a_j\rrr - b_j|/\|a_j\|:j\in [m]\}$. That is, \texttt{rqRK} is forced to choose the index of the largest residual entry. This is exactly the procedure for the Motzkin method as presented by Haddock and Ma ~\cite{greedworks}. It is also noted in ~\cite{greedworks} that the previous best worst-case decay factor $\alpha$ satisfying $\E\|x_{k+1}-\x\|^2\leq \alpha \|x_k-\x\|^2$ was known to be 
        \[
            \alpha_1\Def1-\frac{\s_{\min}^2(A)}{m}.
        \]
        Our work, however, shows that $\alpha_1$ can be reduced to 
        \[
            \alpha_2\Def1-\frac{\s_{\min}^2(A)}{m}-\frac{\s_{1-1/m,\min}(A)^2}{m^2-m}.
        \]
        This is the same term given by choosing $k=m$ in equation (\ref{eq:kres}) at the end of Section \ref{sec:convergenceofdqrk}. For remarkably ill-conditioned matrices, this new decay factor $\alpha_2$ may not differ from $\alpha_1$. However, $\alpha_2<\alpha_1$ if and only if $\s_{1-1/m,\min}(A)>0$. In the case that $m>n$, this last condition is equivalent to requiring that any submatrix of $A$ given by removing a single row is still of full rank.

\section{Analysis of the Double-Quantile Random Kaczmarz Method}
    \label{sec:convergenceofdqrk}
    In this section, we prove the convergence result of \texttt{dqRK}. It will shortly become evident that this result is a product of melding our \texttt{rqRK} results with Steinerberger's work on \texttt{qRK} \cite{qRK}. To this end, within Section \ref{sec:prereq} we present useful corollaries to~\cite[Lemma~1,2]{qRK}, written in terms of the parameters of \texttt{dqRK} provided in Section \ref{sec:contributions}. We then present Lemma \ref{lem:lemdqrk}, which serves as the \texttt{dqRK} analog to \cite[Lemma~3]{qRK}. In Section \ref{sec:boundondqrk}, we mesh Corollaries \ref{cor:Q1boundcor} and \ref{cor:expectedcorrupterrbound} and Lemma \ref{lem:lemdqrk} to bound the expected approximation error of \texttt{dqRK}. For the rest of the paper, we assume $A$ is row-normalized. 

    \subsection{Prerequisites}
        \label{sec:prereq}
        The following will be used to present our results. As \texttt{dqRK} has two quantiles to track, we denote $Q_0,Q_1$ to be the $q_0,q_1$ quantile value of $\big\{|\lll x_k,a_j\rrr - b_j|:j\in [m]\big\}$. We will also use $C$ to denote the set of indices corresponding to corrupted rows. We will let $B_1$ be the set of indices $j\in [m]$ for which $|\lll x_k,a_j\rrr-b_j|\leq Q_1$, and $B$ will be the set of indices $j\in [m]$ for which $Q_0<|\lll x_k,a_j\rrr-b_j|\leq Q_1$. We view $B$ and $B_1$ as the set of admissible indices for which \texttt{dqRK} and \texttt{qRK}, respectively, randomly choose from. Two particular subsets of $C$ are also of importance: $S$ will be the set of indices $i\in C\cap B$ and $S_0$ will be the set of indices $i\in C \cap (B_1\sm B)$. With this notation, $S$ and $S_0\cup S$ are the collections of corrupted indices \texttt{dqRK} and \texttt{qRK}, respectively, might select. For convenience, these freshly introduced terms are enumerated here:
        \begin{align*}
            Q_0&=q_0\quant\groupp{\big\{|\lll x_k,a_j\rrr - b_j|:j\in [m]\big\}} \\
            Q_1&=q_1\quant\groupp{\big\{|\lll x_k,a_j\rrr - b_j|:j\in [m]\big\}} \\
            C &= \groupc{j\in [m]: (b_\ve)_j \neq 0}\\
            S_0 &= \groupc{j\in C: |\lll x_k,a_j\rrr-b_j|\leq Q_0} \\
            S\,\, &= \groupc{j\in C: Q_0< |\lll x_k,a_j\rrr-b_j|\leq Q_1} \\
            B_1 &= \groupc{j\in [m]: |\lll x_k,a_j\rrr-b_j|\leq Q_1} \\
            B \,\, &= \groupc{j\in [m]: Q_0< |\lll x_k,a_j\rrr-b_j|\leq Q_1}.
        \end{align*}
        Lastly, we write expectation over a set $X$ as $\E_{j\in X}Z=\frac{1}{|X|}\sum_{j\in X}Z(j)$.
        
        As it stands, Corollaries \ref{cor:Q1boundcor} and \ref{cor:expectedcorrupterrbound} follow directly from \cite[Lemma~1,2]{qRK}. Corollary \ref{cor:Q1boundcor} will serve to bound the upper quantile $Q_1$. This is then implemented in Corollary \ref{cor:expectedcorrupterrbound}, which serves as a means to moderate the magnitude of deviation from the desired solution that might be incurred by selecting a violated row. 
        Lemma \ref{lem:lemdqrk}, however, requires slight modification of \cite[Lemma~3]{qRK}. This is an artifact of the subsystem examined in its proof that now depends on both the upper and the lower quantile of \texttt{dqRK} as opposed to solely the upper quantile in \texttt{qRK}. In fact, this is where we implement the techniques used to prove the results of \texttt{rqRK} to recover a similar bound. As such, we present Corollaries \ref{cor:Q1boundcor} and \ref{cor:expectedcorrupterrbound} in terms of the parameters for \texttt{dqRK} without proof and then offer the necessary modifications to obtain Lemma \ref{lem:lemdqrk}. 
        
        Unless otherwise stated, vectors $x_{k}$ are iterates of \texttt{dqRK} and we take expectation conditional on the first $k-1$ iterates.  
        
        \begin{cor} 
            Let $0<q_0<q_1<1-\beta$, suppose $A\x=b_t$ for some $b_t\in \R^m$, and let $b_\ve\in \R^m$ with $\|b_\ve\|_0\leq \beta m$. If $x_k\in \R^n$ is the the $k$-th iterate of \texttt{dqRK}, we can bound the $q_1$ quantile of residuals $Q_1$ as follows:
            \[
                Q_1\leq \frac{\s_{\max}(A)}{\sqrt{m}\sqrt{1-q_1-\beta}}\|x_k-\x\|.
            \]
            \label{cor:Q1boundcor}
        \end{cor}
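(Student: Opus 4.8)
The plan is to observe that a definite fraction of the residual entries must be simultaneously large in magnitude and uncorrupted, and then to convert that observation into a spectral estimate through $\s_{\max}(A)$. First I would record the elementary identity that for every uncorrupted index $j\notin C$ the residual satisfies $\groupabs{\lll x_k,a_j\rrr-b_j}=\groupabs{\lll x_k-\x,a_j\rrr}$, since $b_j=(b_t)_j$ there and $A\x=b_t$, while the normalization is trivial because $A$ is row-normalized.

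Next I would introduce $T\Def\groupc{j\in[m]:\groupabs{\lll x_k,a_j\rrr-b_j}>Q_1}$, the complement of the set $B_1$. By the construction of the $q_1$-quantile together with the multiset conventions fixed earlier, $|T|=(1-q_1)m$ and every $j\in T$ has residual at least $Q_1$. Since at most $\beta m$ indices are corrupted, the uncorrupted part $T\sm C$ satisfies $|T\sm C|\geq(1-q_1-\beta)m$, which is strictly positive because $q_1<1-\beta$.

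The core of the argument is then a two-sided estimate on $\sum_{j\in T\sm C}\groupabs{\lll x_k-\x,a_j\rrr}^2$. From below, using the identity above and $\groupabs{\lll x_k-\x,a_j\rrr}\geq Q_1$ for $j\in T\sm C$, this sum is at least $(1-q_1-\beta)m\,Q_1^2$; from above, it is dominated by the full sum $\sum_{j=1}^m\groupabs{\lll x_k-\x,a_j\rrr}^2=\|A(x_k-\x)\|^2\leq\s_{\max}^2(A)\|x_k-\x\|^2$. Chaining these,
\[
(1-q_1-\beta)m\,Q_1^2 \;\leq\; \sum_{j\in T\sm C}\groupabs{\lll x_k-\x,a_j\rrr}^2 \;\leq\; \|A(x_k-\x)\|^2 \;\leq\; \s_{\max}^2(A)\|x_k-\x\|^2,
\]
and dividing by $(1-q_1-\beta)m$ and taking square roots gives exactly $Q_1\leq\s_{\max}(A)\|x_k-\x\|/\groupp{\sqrt{m}\sqrt{1-q_1-\beta}}$.

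The only place demanding care is the bookkeeping around the quantile convention: with a multiset and possible ties at the value $Q_1$, one must confirm that the complement of the $q_1m$ smallest entries has exactly $(1-q_1)m$ elements, each at least $Q_1$ — this is immediate from the definition of $\groupc{s:s>q\quant(S)}$ adopted in the notation — after which everything is routine. Indeed, this corollary is precisely \cite[Lemmas~1--2]{qRK} rewritten in the parameters of \texttt{dqRK}, so no genuinely new idea is required; I would present it without further detail.
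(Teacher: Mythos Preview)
Your proof is correct and follows essentially the same argument as the paper: count at least $(1-q_1-\beta)m$ uncorrupted indices whose residual magnitude exceeds $Q_1$, lower bound the corresponding squared sum by $(1-q_1-\beta)mQ_1^2$, and upper bound it by $\s_{\max}^2(A)\|x_k-\x\|^2$. The only cosmetic difference is that the paper sums over all uncorrupted indices $j\notin C$ rather than your smaller set $T\sm C$, and your closing remark should cite \cite[Lemma~1]{qRK} only, since \cite[Lemma~2]{qRK} is the source of Corollary~\ref{cor:expectedcorrupterrbound}, not this one.
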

        Aside from notational differences, Corollary \ref{cor:Q1boundcor} follows directly from \cite[Lemma~1]{qRK}. This is because even in the \texttt{dqRK} setting, we know there are at least $(1-q_1-\beta)m$ \textit{non-corrupt} residual values with magnitude larger than $Q_1$. This allows us to similarly conclude 
        \[
            (1-q_1-\beta) Q_1^2 \leq \sum_{j\not\in C} \groupabs{b_j-\lll x_k,a_j\rrr}^2 \leq \s_{\max}^2(A) \|x_k-\x\|^2.       
        \]
        
        \begin{cor}
            In the same setting as Corollary \ref{cor:Q1boundcor}, we have
            \begin{equation}
                \E_{i\in S}\|x_{k+1}-\x\|^2\leq \groupp{1+\frac{\s_{\max}^2(A)}{\sqrt{|S|}\sqrt{m}}\groupp{\frac{2}{\sqrt{1-q_1-\beta}}+\frac{\sqrt{\beta}}{1-q_1-\beta}}}\|x_k-\x\|^2,\label{eq:expectedcorrupterrboundqrk}
            \end{equation}
            where $S$ is the set of admissible corrupt indices. 
            \label{cor:expectedcorrupterrbound}
        \end{cor}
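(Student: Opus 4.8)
The plan is to mirror the argument of \cite[Lemma~2]{qRK} but carefully track that the admissible corrupt set for \texttt{dqRK} is $S=C\cap B$ rather than the full admissible corrupt set $S_0\cup S$ of \texttt{qRK}; the key structural point is that selecting an index $i\in S$ still corresponds to a residual entry bounded above by $Q_1$, which is all that the original proof uses. First I would write, for a fixed $i\in S$, the single-step identity
\[
    \|x_{k+1}-\x\|^2 = \|x_k-\x\|^2 - |\lll x_k-\x,a_i\rrr|^2 + 2\lll x_k-\x,a_i\rrr\,(b_\ve)_i + (b_\ve)_i^2,
\]
which follows from the \texttt{dqRK} update $x_{k+1}=x_k+(b_i-\lll x_k,a_i\rrr)a_i$ together with $b_i=\lll \x,a_i\rrr+(b_\ve)_i$ and row-normalization $\|a_i\|=1$. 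Averaging over $i\in S$ and discarding the non-positive $-|\lll x_k-\x,a_i\rrr|^2$ term, it remains to bound $\E_{i\in S}\,2\lll x_k-\x,a_i\rrr (b_\ve)_i$ and $\E_{i\in S}(b_\ve)_i^2$.

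For the cross term, I would apply Cauchy–Schwarz over the averaging set: $\E_{i\in S}\lll x_k-\x,a_i\rrr (b_\ve)_i \leq \sqrt{\E_{i\in S}|\lll x_k-\x,a_i\rrr|^2}\cdot\sqrt{\E_{i\in S}(b_\ve)_i^2}$. Since every $i\in S$ satisfies $|\lll x_k,a_i\rrr-b_i|\leq Q_1$, we get $|(b_\ve)_i| = |\lll \x,a_i\rrr + (b_\ve)_i - \lll x_k,a_i\rrr + \lll x_k-\x,a_i\rrr| \leq Q_1 + |\lll x_k-\x,a_i\rrr| $; summing the squares over $i\in S$ and bounding $\sum_{i\in S}|\lll x_k-\x,a_i\rrr|^2 \leq \s_{\max}^2(A)\|x_k-\x\|^2$ yields $|S|\E_{i\in S}(b_\ve)_i^2 \leq \bigl(\sqrt{|S|}\,Q_1 + \s_{\max}(A)\|x_k-\x\|\bigr)^2$, hence $\sqrt{\E_{i\in S}(b_\ve)_i^2}\leq Q_1 + \s_{\max}(A)\|x_k-\x\|/\sqrt{|S|}$. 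Plugging in the bound $Q_1\leq \s_{\max}(A)\|x_k-\x\|/(\sqrt{m}\sqrt{1-q_1-\beta})$ from Corollary \ref{cor:Q1boundcor} and also using $\E_{i\in S}|\lll x_k-\x,a_i\rrr|^2\leq \s_{\max}^2(A)\|x_k-\x\|^2/|S|$ for the first factor, everything collapses into $\|x_k-\x\|^2$ times the stated constant after combining the $2\sqrt{\beta}/\sqrt{1-q_1-\beta}$ contribution from the cross term and the $\beta/(1-q_1-\beta)$ contribution from the $(b_\ve)_i^2$ term — noting $|S|\leq \beta m$ is what converts the loose $1/\sqrt{|S|}$ and $1/|S|$ prefactors into the $\sqrt{\beta/|S|}$-type factors and ultimately into the $1/(\sqrt{|S|}\sqrt{m})$ normalization displayed in \eqref{eq:expectedcorrupterrboundqrk}.

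The one genuinely delicate bookkeeping step — and the part most likely to need care rather than being purely routine — is tracking the $|S|$-versus-$\beta m$ factors so that the final prefactor reads exactly $\s_{\max}^2(A)/(\sqrt{|S|}\sqrt{m})$ with the parenthetical $\bigl(2/\sqrt{1-q_1-\beta}+\sqrt{\beta}/(1-q_1-\beta)\bigr)$; the $\sqrt{\beta}$ in the second summand comes from $|S|\leq\beta m$ applied once in the $(b_\ve)_i^2$ estimate, whereas the cross term picks up only a factor $2$ because the Cauchy–Schwarz already extracts one $\sqrt{|S|}$ from each factor. Since the statement of Corollary \ref{cor:expectedcorrupterrbound} asserts it "follows directly from \cite[Lemma~2]{qRK}" and differs only in that $S$ here denotes $C\cap B$ (a subset of the \texttt{qRK} admissible corrupt set) while the residual bound $\le Q_1$ and the count $|S|\le\beta m$ are unchanged, it suffices to observe that the proof of \cite[Lemma~2]{qRK} never uses the lower endpoint of the admissible window, so the identical argument applies verbatim with $Q_1$ in the role of the single \texttt{qRK} quantile. $\blacksquare$
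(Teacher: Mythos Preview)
Your concluding observation --- that Steinerberger's proof of \cite[Lemma~2]{qRK} never uses the lower endpoint of the admissible window, so it applies verbatim with $Q_1$ in place of $Q$ --- is exactly what the paper says, and is the entire content of its ``proof'' of this corollary. In that sense your high-level approach is correct and matches the paper.

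However, your reconstruction of the underlying argument contains an algebraic error that muddles the bookkeeping. The single-step identity you wrote is wrong: with $\|a_i\|=1$ and $b_i=\lll \x,a_i\rrr+(b_\ve)_i$, the update $x_{k+1}=x_k+(b_i-\lll x_k,a_i\rrr)a_i$ gives $x_{k+1}-\x=(x_k-\x)-\lll x_k-\x,a_i\rrr a_i+(b_\ve)_i a_i$, and since the first two terms are orthogonal to $a_i$ one obtains
\[
\|x_{k+1}-\x\|^2=\|x_k-\x\|^2-|\lll x_k-\x,a_i\rrr|^2+(b_\ve)_i^2,
\]
with \emph{no} cross term $2\lll x_k-\x,a_i\rrr(b_\ve)_i$. (In your expansion the $+2p(b_\ve)_i$ from the inner product and the $-2p(b_\ve)_i$ from the square cancel.) Consequently there is nothing to hit with Cauchy--Schwarz over $S$; the only task is to bound $(b_\ve)_i^2$. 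Using your own triangle-inequality estimate $|(b_\ve)_i|\le Q_1+|\lll x_k-\x,a_i\rrr|$ and expanding the square, the $|\lll x_k-\x,a_i\rrr|^2$ piece cancels exactly against the $-|\lll x_k-\x,a_i\rrr|^2$ in the identity, leaving
\[
\|x_{k+1}-\x\|^2\le\|x_k-\x\|^2+Q_1^2+2Q_1|\lll x_k-\x,a_i\rrr|.
\]
Averaging over $i\in S$, bounding $\E_{i\in S}|\lll x_k-\x,a_i\rrr|\le \s_{\max}(A)\|x_k-\x\|/\sqrt{|S|}$, inserting Corollary~\ref{cor:Q1boundcor}, and using $|S|\le\beta m$ once (to convert the $Q_1^2$ prefactor $1/m$ into $\sqrt{\beta}/(\sqrt{|S|}\sqrt{m})$) yields the stated constant. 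So the $2/\sqrt{1-q_1-\beta}$ summand comes from the $2Q_1|\lll\cdot\rrr|$ piece and the $\sqrt{\beta}/(1-q_1-\beta)$ summand from the $Q_1^2$ piece --- both arising from the expansion of $(b_\ve)_i^2$, not from a separate cross term. This also resolves the internal inconsistency in your last two paragraphs about which term contributes which factor.
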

        
        Our set $S$ of corrupt indices admissible in \texttt{dqRK} depends on both $Q_0$ and $Q_1$. The analog to $S$ in \cite{qRK}, i.e. the set of corrupt indices admissible in \texttt{qRK}, only depends on $Q_1$. As such it may seem improbable that the right-hand side of (\ref{eq:expectedcorrupterrboundqrk}) only depends on $q_1$. However, the proof in \cite{qRK} hinges only on the quantile serving as an upper bound for the residuals indexed by $S$. Thus the proof of Corollary \ref{cor:expectedcorrupterrbound} follows verbatim from \cite[Lemma~2]{qRK} with $Q_1$ in place of $Q$. We note here that using $Q_0$ would instead be a means to lower bound the left hand side of ({\ref{eq:expectedcorrupterrboundqrk}). 
        
        \begin{lem}
            In the same setting as Corollary \ref{cor:Q1boundcor}, we have
            \[
                \E_{i\in B\sm S}\|x_{k+1}-\x\|^2\leq \groupp{1-\frac{\s_{q_1-\beta,\min}^2(A)}{q_1 m}-\frac{\s_{q_0-\beta,\min}^2(A)}{q_0q_1 m^2}}\|x_k-\x\|^2.
            \]
            \label{lem:lemdqrk}
        \end{lem}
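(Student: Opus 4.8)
\textbf{Proof plan for Lemma \ref{lem:lemdqrk}.}

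The plan is to mirror the structure of the \texttt{rqRK} analysis (the proof of Theorem \ref{theorem:rqrk}), but now applied to the restricted index set $B\sm S$ of admissible \emph{non-corrupt} rows, which by construction consists of indices $j$ with $Q_0 < |\lll x_k,a_j\rrr - b_j| \leq Q_1$ and $(b_\ve)_j = 0$. As in the \texttt{rqRK} proof, the Pythagorean identity gives, for a row $i$ with $(b_\ve)_i = 0$,
\[
    \|x_{k+1}-\x\|^2 = \groupp{1 - \l|\l\lll \tfrac{x_k-\x}{\|x_k-\x\|}, \a_i\r\rrr\r|^2}\|x_k-\x\|^2,
\]
since row-normalization means $\a_i = a_i$ and the update is exact on non-corrupt rows. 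So I would reduce the claim to the lower bound
\[
    \E_{i\in B\sm S}\l|\l\lll \tfrac{x_k-\x}{\|x_k-\x\|}, a_i\r\rrr\r|^2 \geq \frac{\s_{q_1-\beta,\min}^2(A)}{q_1 m}+\frac{\s_{q_0-\beta,\min}^2(A)}{q_0q_1 m^2}.
\]

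First I would establish the two-term structure. Write $\E_{i\in B\sm S}$ as an average over $|B\sm S|$ indices; note $|B\sm S|\leq |B|\leq q_1 m - q_0 m$ is not quite what is needed, so instead I would observe that the set $B_1\sm(S_0\cup S)$ of \emph{all} non-corrupt indices with residual $\leq Q_1$ has size at least $q_1 m - \beta m$, and the set of non-corrupt indices with residual $\leq Q_0$ has size at least $q_0 m - \beta m$. The key inequality, exactly as in \eqref{eq:qmQ2bound}, is that for any non-corrupt index set $N$ with $|N| = \alpha m$, $\sum_{j\in N}|\lll x_k - \x, a_j\rrr|^2 \geq \s_{\alpha,\min}^2(A)\|x_k-\x\|^2$, since $A\x = b_t$ on those rows. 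I would apply this once with the $(q_1-\beta)m$ smallest non-corrupt residuals to extract a bound $Q_1^2 \geq \s_{q_1-\beta,\min}^2(A)\|x_k-\x\|^2/((q_1-\beta)m)$... but actually the cleaner route (and the one matching the stated constants) is to bound the \emph{sum} over $B\sm S$ directly: every residual in $B\sm S$ exceeds $Q_0$, so $\sum_{i\in B\sm S}|\lll x_k-\x,a_i\rrr|^2 \geq |B\sm S|\,Q_0^2$, and separately the full collection of non-corrupt residuals below $Q_1$ controls $Q_1$ from below via the $\s_{q_1-\beta,\min}$ term, while the non-corrupt residuals below $Q_0$ control $Q_0$ from below via $\s_{q_0-\beta,\min}$. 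Dividing the summed lower bound by $|B\sm S|$, and using $|B\sm S| \leq q_1 m$ in the denominators together with the quantile lower bounds $Q_0^2 \geq \s_{q_0-\beta,\min}^2(A)\|x_k-\x\|^2/(q_0 m)$ applied appropriately, should reassemble the two fractions $\frac{\s_{q_1-\beta,\min}^2}{q_1 m}$ and $\frac{\s_{q_0-\beta,\min}^2}{q_0 q_1 m^2}$.

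The main obstacle, and the place I would be most careful, is the bookkeeping of cardinalities and which quantile lower-bounds which piece: one has to track simultaneously that (i) there are enough non-corrupt small residuals to invoke $\s_{q_1-\beta,\min}$ for the ``$X_k$-type'' term coming from the \texttt{RK}-style bound, and (ii) there are enough non-corrupt residuals below $Q_0$ to invoke $\s_{q_0-\beta,\min}$ for the ``quantile gap'' term coming from the Corollary \ref{cor:maincorollary} telescoping argument, all while the averaging set $B\sm S$ may be strictly smaller than $|B|$ because corrupt indices have been removed. The delicate point is that removing the corrupt indices can only shrink $B\sm S$, which \emph{helps} the lower bound on the average (dividing by a smaller number), but one must ensure the numerator sums are still over genuinely non-corrupt rows so that $A\x = b_t$ applies — this is exactly why the $\beta$-shifted quantiles $\s_{q_1-\beta,\min}$ and $\s_{q_0-\beta,\min}$ appear rather than $\s_{q_1,\min}$ and $\s_{q_0,\min}$. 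Once the index accounting is pinned down, the rest is the same Pythagorean-plus-quantile computation already carried out for \texttt{rqRK}.
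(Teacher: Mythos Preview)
Your overall plan — mirror the \texttt{rqRK} argument, compare the average over $B\sm S$ to an average over the larger set $B_1\sm(S_0\cup S)$ via Corollary~\ref{cor:maincorollary}, and extract an ``\texttt{RK}-type'' term $\s_{q_1-\beta,\min}^2/(q_1 m)$ plus a ``quantile-gap'' term $\s_{q_0-\beta,\min}^2/(q_0 q_1 m^2)$ — is the same structure the paper uses. But you have misidentified the main obstacle. It is not cardinality bookkeeping; it is that the telescoping gap from Corollary~\ref{cor:maincorollary} only yields $Q_0^2/\ell^\ast$ if the smallest value in the comparison set is zero, i.e.\ $f(1)=0$. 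In the \texttt{rqRK} proof this held because $x_k$ lies on the hyperplane of the previously selected row $r$, so $\lll x_k-\x,a_r\rrr=0$. Here, however, the previous row $r$ may be \emph{corrupt}: then $\lll x_k,a_r\rrr=b_r\neq (b_t)_r$, so $\lll x_k-\x,a_r\rrr\neq 0$, and moreover $r\notin B_1\sm(S_0\cup S)$. There is no reason any non-corrupt index $j$ should satisfy $\lll x_k-\x,a_j\rrr=0$, so with your choice of $f$ the gap term $f(\ell+1)-f(1)$ cannot be bounded below by $Q_0^2$, and the second fraction does not follow. Your ``direct'' alternative, bounding the average from below by $Q_0^2$ alone, yields only $\s_{q_0-\beta,\min}^2/(q_0 m)$, which is a different bound and in general does not imply the one stated (for instance when $(q_0-\beta)m<n$ it vanishes while the lemma's bound need not).

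The paper resolves this by a device you did not anticipate: it adjoins the (possibly corrupt) row $r$ to the comparison set, forming $V=A_{(B_1\sm(S_0\cup S))\cup\{r\}}$, and works with the \emph{residual} $g_{k+1}(j)=|b_{\tau(j)}-\lll x_k,v_j\rrr|^2$ rather than with $|\lll x_k-\x,v_j\rrr|^2$. Since $\lll x_k,a_r\rrr=b_r$ regardless of whether $r$ is corrupt, one always has $g_{k+1}(r')=0$ and hence $f(1)=0$, recovering the gap term $\geq Q_0^2/\ell^\ast$. The price is that the Pythagorean identity no longer reads $\|x_{k+1}-\x\|^2=(1-|\lll\frac{x_k-\x}{\|x_k-\x\|},a_i\rrr|^2)\|x_k-\x\|^2$ on the possibly-corrupt row $r$; the paper instead uses $\|x_{k+1}-\x\|^2=\|x_k-\x\|^2-|b_{\tau(Y_{k+1})}-\lll x_k,v_{Y_{k+1}}\rrr|^2$ (valid for non-corrupt $Y_{k+1}$) and, when lower-bounding the ``$X$-type'' expectation, drops the row $r$ if it is corrupt before invoking $\s_{q_1-\beta,\min}$. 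This interplay between including $r$ (to force $f(1)=0$) and excluding it (to apply the singular-value bound on genuinely non-corrupt rows) is the missing idea in your plan.
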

        \begin{proof}
            We note that the collection of indices $B\sm S$ are those that are both admissible and non-corrupt, so $A_{B\sm S}\x = b_t$. We note that $B_1\sm (S\cup S_0)$ is $B\sm S$ combined with the non-corrupt indices corresponding to residuals smaller than the $q_0$-quantile $Q_0$. This suggests we might be able to use the results of \texttt{rqRK} with a well-curated quantile parameter. Unfortunately, we cannot directly apply the results of \texttt{rqRK} because the hypotheses of Theorem \ref{theorem:rqrk} requires $x_k$ to satisfy 
            \begin{equation}
                \lll x_k, a_j\rrr = b_j
                \label{eq:rowsatisfied}
            \end{equation}
            for some $j\in B_1\sm (S\cup S_0)$. However, it is entirely possible that the index selected in iteration $k$ belongs to $C$, whereby it is not clear that there exists an $i\in B_1\sm (S\cup S_0)$ for which (\ref{eq:rowsatisfied}) holds. Instead, we will provide a modification of the proof of Theorem \ref{theorem:rqrk} that allows for the possibility that $x_k$ was generated using a corrupt row. 
        
            Let $r$ be the index of the row used to generate $x_k$. We will let $V$ be the matrix $A$ indexed by $\big(B_1\sm (S\cup S_0)\big)\cup \{r\}$. We denote the $i$-th row of $V$ by $v_i$. Let $\tau$ be a map such that $v_{i}=a_{\tau(i)}$, and let $r' = \tau\inv(r)$ so that $v_r'=a_r$. To stay consistent with the notation from Theorem \ref{theorem:rqrk}, we let $X_{k+1}$ and $Y_{k+1}$ be the random variables that assume an indices $\tau\inv\groupb{(B_1\sm (S\cup S_0))\cup \{r\}}$ and $\tau\inv\groupb{B\sm S}$ with uniform distribution at step $k+1$, respectively.
            
            Note that the relation 
            \[
                \|x_{k+1}-\x\|^2=\groupp{1-\l|\l\lll \frac{x_{k}-\x}{\|x_{k}-\x\|}, v_{Y_k}\r\rrr\r|^2}\|x_{k}-\x\|^2,
            \]
            from the proof of Theorem \ref{theorem:rqrk} no longer holds for any $x_k$ because it relies on $b_i-\lll \x, v_j\rrr = 0$, and the possibility of $r$ corresponding to a corrupted row means we cannot ensure this holds for $j=r'$. However, we can still use 
            \[
                \|x_{k+1}-\x\|^2=\|x_{k}-\x\|^2-\groupabs{b_{\tau(Y_{k+1})} - \lll x_{k}, v_{Y_{k+1}}\rrr}^2.
            \]
            We will focus on bounding 
            \[
                \E\groupb{\groupabs{b_{\tau(Y_{k+1})} - \lll x_{k}, v_{Y_{k+1}}\rrr}^2}.
            \]
        
            We define $\ell^\ast=\|V\|_F^2$ and $\ell=\groupabs{(B_1\sm(S\cup S_0)\cup \{r\}}-\groupabs{B\sm S}$. With this, we let $f_{k+1}=g_{k+1}\circ \s$, where $g_{k+1}(j)=\groupabs{b_{\tau(j)}-\l\lll x_k,v_j\r\rrr}^2$ and $\s$ is a permutation of $[\ell^\ast]$ such that $g_{k+1}(\s(1)),\dots, g_{k+1}(\s(\ell^\ast))$ is non-decreasing. Corollary \ref{cor:maincorollary} implies 
            \begin{align*}
                \E\groupb{\groupabs{b_{\tau(Y_{k+1})} - \lll x_{k}, v_{Y_{k+1}}\rrr}^2} &\geq \E\groupb{\groupabs{b_{\tau(X_{k+1})} - \lll x_{k}, v_{X_{k+1}}\rrr}^2} \\
                &+ \sum_{j=1}^{\ell} \Big(f_{k+1}(j+1)-f_{k+1}(j)\Big)\PP\groupp{X_{k+1}=j\mid X_{k+1}\not\in [j-1]}.
            \end{align*}
            Immediately, we have $f_{k+1}(1)=0$ because $x_{k}$ satisfies $b_{r}-\lll x_{k},v_{r'}\rrr=0$. It is also easy to see $f_{k+1}(\ell+1)\geq Q_0^2$. Combining these facts with 
            \[
                \PP(X_{k+1}=j\mid X_{k+1}\in[j-1])\geq \PP(X_{k+1}=j) = \frac{1}{\ell^\ast},
            \]
           and we have 
            \[
                \E\groupb{\groupabs{b_{\tau(Y_{k+1})} - \lll x_{k}, v_{Y_{k+1}}\rrr}^2} \geq \E\groupb{\groupabs{b_{\tau(X_{k+1})} - \lll x_{k}, v_{X_{k+1}}\rrr}^2} + \frac{Q_0^2}{\ell^\ast}.
            \]
            We now turn our focus to $\E\groupb{\groupabs{b_{\tau(X_{k+1})} - \lll x_{k}, v_{X_{k+1}}\rrr}^2}$. In the case that $r\in C$, let $J$ denote the set of rows of $V$ not given by $a_r$ in our construction of $V$. If $r\not\in C$, we let $J$ be all the indices of the rows of $V$. In either case, we have $b_{\tau(j)}-\lll x_{k},v_j\rrr = \lll \x - x_{k}, v_j\rrr$ for all $j\in J$. Then whether $r\in C$ or $r\not\in C$, we have 
            \begin{align*}
                \E\groupb{\groupabs{b_{X_{k+1}} - \lll x_{k}, v_{X_{k+1}}\rrr}^2} &= \sum_{j=1}^{\ell^\ast} \frac{1}{\ell^\ast} \groupabs{b_j - \lll x_{k}, v_j\rrr}^2 \\
                &\geq \sum_{j\in J} \frac{1}{\ell^\ast} \groupabs{\lll \x- x_k,v_j\rrr}^2 \\
                &= \frac{1}{\ell^\ast} \|V_J (x_k-\x)\|^2 \\
                &\geq \frac{\s_{\min}^2(V_J)\|x_k-\x\|^2}{\ell^\ast}.
            \end{align*}
            Because $J$ is the set of indices of non-corrupt rows with residual entries less than $Q_1$ in magnitude, it is clear $(q_1-\beta)m\leq |J|$. This immediately implies $\s_{q_1-\beta,\min}(A) \leq \s_{\min}(V_J)$. Moreover, $V$ consists only of rows whose residuals are less than $Q_1$, so $\ell^\ast\leq q_1m$.  Altogether, we have
            \[
                \E_{i\in B\sm S}\groupb{\|x_{k+1}-\x\|^2} \leq \groupp{1-\frac{\s_{q_1-\beta,\min}^2(A)}{q_1m} - \frac{Q_0^2}{q_1m\|x_k-\x\|^2}} \|x_k-\x\|^2.
            \]
        
            Moreover, we can denote $N=\groupc{i\in [m]: i\not\in C, |b_j-\lll x_k, a_j\rrr|\leq Q_0}$. There are at least $(q_0-\beta)m$ rows that are not violated that witness $|b_j-\lll x_k, a_j\rrr|\leq Q_0$, so $|N|\geq (q_0-\beta)m$. It is also easy to see $|N|\leq q_0 m$. For $j\in N$, we have $\groupabs{b_j-\lll x_k,a_j\rrr} = \groupabs{\lll x_k-\x,a_j\rrr}$, so  
            \begin{align*}
                q_0m Q_0^2 &\geq \sum_{j\in N} \groupabs{\lll x_k-\x, a_j\rrr}^2 \\
                &= \|A_N (x_k-\x)\|^2 \\
                &\geq \s_{\min}^2(A_N)\|x_k-\x\|^2 \\
                &\geq \s_{q_0-\beta,\min}^2(A)\|x_k-\x\|^2.
            \end{align*}
            In conclusion,  
            \[
                \E_{i\in B\sm S}\groupb{\|x_{k+1}-\x\|^2} \leq \groupp{1-\frac{\s_{q_1-\beta,\min}^2(A)}{q_1m} - \frac{\s_{q_0-\beta,\min}^2(A)}{q_0q_1m^2}}\|x_k-\x\|^2.
            \]
        \end{proof}
        
        \noindent Just as before, if $(q_0-\beta)m<n$, we have $\s_{q_0-\beta,\min}(A)=0$ so that our bound simplifies to that of \cite[Lemma 3]{qRK}. But, as before, we have that $\E_{i\in B\sm S}\|x_{k+1}-\x\|<\E_{i\in B_1\sm (S\cup S_0)}\|x_{k+1}-\x\|^2$ as long as there are two distinct values in 
        \[
            \groupc{\l|\l\lll \frac{x_{k}-\x}{\|x_k-\x\|}, a_j\r\rrr\r|^2}_{j\in B_1\sm(S\cup S_0)}.
        \]
    
    \subsection{The Bound on \texttt{dqRK}}
        \label{sec:boundondqrk}
        Theorem \ref{theorem:dqrk} asserts that if the conditioning on $A$ in (\ref{eq:syseq}) is sufficiently well behaved and $b$ has sparse, unbounded corruption, it is possible to recover a solution to the true system $Ax=b_t$ faster than possible with \texttt{qRK} using only information from the corrupted system. The only step where the rate of convergence might not exceed \texttt{qRK} is when producing the first iteration $x_1$. Just as in Theorem \ref{theorem:rqrk}, the hypothesis that $\lll x_0,a_i\rrr = b_i$ for some $i\in[m]$ does not change the convergence result drastically. In practice, it would be feasible to choose an index $i\in [m]$ deterministically or at random to define the first iterate $x_0=b_ia_i$ to satisfy the hypothesis. We closely mimic Steinerberger's \cite{qRK} proof with our modified 3rd lemma in order to attain Theorem \ref{theorem:dqrk}:
        \dqrkresult*
        \begin{proof}
            Let $e_k=x_k-\x$. Starting with 
            \begin{align*}
                \E\|e_{k}\|^2 &= \E_{i\in B\sm S} \|e_{k}\|^2 \PP(i\in B\sm S) + \E_{i\in S} \|e_{k}\|^2 \PP(i\in S) \\
                &=  \groupp{1-\frac{|S|}{(q_1-q_0)m}}\E_{i\in B\sm S} \|e_{k}\|^2  + \frac{|S|}{(q_1-q_0)m} \E_{i\in S} \|e_{k}\|^2
            \end{align*}
            and then applying Corollary \ref{cor:expectedcorrupterrbound} and Lemma \ref{lem:lemdqrk}, we have 
            \begin{align*}
                \E\|e_{k}\|^2 &\leq \Bigg[\groupp{1-\frac{|S|}{(q_1-q_0)m}}\groupp{1-\frac{\s_{q_1-\beta,\min}^2(A)}{q_1 m}-\frac{\s_{q_0-\beta,\min}^2(A)}{q_0q_1 m^2}}\\
                &+ \frac{|S|}{(q_1-q_0)m}\groupp{1+\frac{\s_{\max}^2(A)}{\sqrt{|S|}\sqrt{m}}\groupp{\frac{2}{\sqrt{1-q_1-\beta}}+\frac{\sqrt{\beta}}{1-q_1-\beta}}}\Bigg]\|e_{k-1}\|^2.
            \end{align*}
            It is clear this is increasing in $|S|$, so $|S|\leq \beta m$ gives us 
            \begin{align*}
                \E\|e_{k}\|^2 &\leq \Bigg[\groupp{1-\frac{\beta}{q_1-q_0}}\groupp{1-\frac{\s_{q_1-\beta,\min}^2(A)}{q_1 m}-\frac{\s_{q_0-\beta,\min}^2(A)}{q_0q_1 m^2}}\\
                &+ \frac{\beta}{q_1-q_0}\groupp{1+\frac{\s_{\max}^2(A)}{m\sqrt{\beta}}\groupp{\frac{2}{\sqrt{1-q_1-\beta}}+\frac{\sqrt{\beta}}{1-q_1-\beta}}}\Bigg]\|e_{k-1}\|^2.
            \end{align*}
            We can express the coefficient on $\|e_{k-1}\|^2$ as 
            \[
                1+\frac{\s_{\max}^2(A)}{(q_1-q_0)m}\groupp{\frac{2\sqrt{\beta}}{\sqrt{1-q_1-\beta}}+\frac{\beta}{1-q_1-\beta}}-\groupp{q_1-q_0-\beta}\groupp{\frac{\s_{q_1-\beta,\min}^2(A)}{(q_1-q_0)q_1 m}+\frac{\s_{q_0-\beta,\min}^2(A)}{(q_1-q_0)q_0q_1 m^2}}
            \]
            The sum above is strictly greater than 1 if $q_1-q_0\leq \beta$, so we must assume otherwise. Then the following is sufficient to ensure decay:
            \[
                \frac{q_1}{q_1-q_0-\beta}\groupp{\frac{2\sqrt{\beta}}{\sqrt{1-q_1-\beta}}+\frac{\beta}{1-q_1-\beta}} < \frac{1}{\s_{\max}^2(A)}\groupp{\s_{q_1-\beta,\min}^2(A)+\frac{\s_{q_0-\beta,\min}^2(A)}{q_0m}}.
            \]
            These results hold when the first $k$ iterates are fixed. By taking the total expectation and proceeding by induction, we have our desired result. \\
        \end{proof}
        
        It might be interesting to note that the \texttt{qRK} bound can be recovered by Theorem \ref{theorem:dqrk} in some sense. If we follow the convention $\s_{q,\min}(A)=0$ whenever $qm<1$, letting $q_0\ra 0$ gives us the original bound for \texttt{qRK}.
        
        We remark that we can actually choose $q_0$ and $q_1$ so that Algorithm \ref{alg:dqrk} always chooses the row corresponding to the $k$-th largest residual. This can be achieved by letting $q_0=\frac{k-1}{m}$ and $q_1=\frac{k}{m}$. In order for Theorem \ref{theorem:dqrk} above to be applicable in this scenario, we require $\beta< \frac{1}{m}$. In other words, we have no corruption, so we may set $\beta=0$. Then so long as $\s_{q_1,\min}(A)>0$, the theorem above ensures picking the row with the $k$-th largest residual in a system without error will yield a convergent sequence of iterates. More specifically, we can write
        \begin{equation}
            \E\|x_{k}-\x\|^2\leq \groupp{1-\frac{\s_{k/m,\min}^2(A)}{k}-\frac{\s_{(k-1)/m,\min}^2(A)}{k^2-k}}^k \|x_0-\x\|^2 \label{eq:kres}
        \end{equation}
        Though this might be useful in scenarios where corruption can be controlled, choosing rows with larger residuals does not bode well in general inconsistent systems. We will later see numerical experiments where an algorithm like \texttt{dqRK} does not perform in line with the bound in Theorem \ref{theorem:dqrk} because the matrix fails to meet the sufficient conditions. However, we do notice that iterates from \texttt{dqRK} appear to approach an error horizon to the least squares solution just like the iterates of \texttt{qRK} or \texttt{RK} might. Work has been done relating \texttt{RK} or variations of \texttt{RK} to solving for the least-norm least-square solution to $Ax=b$ arising in the case where $A$ and $b$ might be noisy~\cite{needell_LS, Zouzias_REK, exactMSE, doublynoisysystem}.

\section{Numerical Results}
    While theoretically, the new bounds found for \texttt{rqRK} and \texttt{dqRK} are not drastically different from their analogs without lower quantiles, experimentally, the proposed algorithms vastly outperform the old. In addition, \texttt{dqRK} has almost no cost performance-wise when compared to \texttt{qRK}.

    All code was written in \texttt{MATLAB} on an ASUS Zephyrus g14 laptop with 16 GB of memory, an AMD Ryzen 9 5900HS CPU clocked at 3.30 GHz, and a dedicated laptop RTX 3060 GPU. Reproducible code can be found at \url{https://github.com/Mr-E-User/Reverse-Quantile-RK-and-its-Application-to-Quantile-RK.git}. 

    \subsection{Numerical Results for \texttt{rqRK}}
        \begin{figure}[!htb]
        \centering
            \input{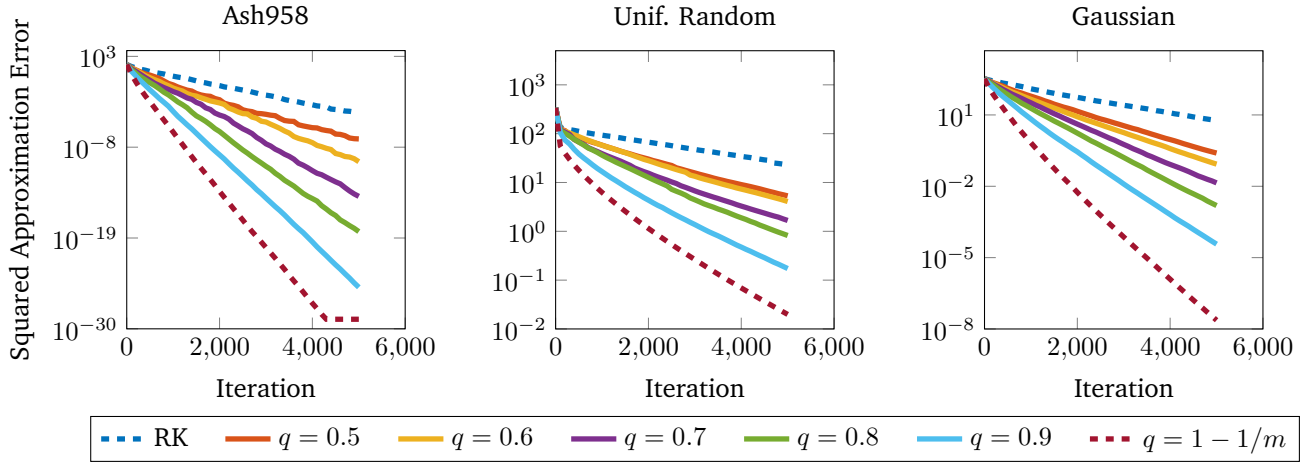}
            \caption{Comparison between sample runs of \texttt{RK} and \texttt{rqRK} for varying lower quantiles $q$.}
            \label{fig:rqRK_comp}
        \end{figure}
        
        Figure~\ref{fig:rqRK_comp} compares the performance of \texttt{RK} and \texttt{rqRK} with different choices of $q$ on the $958\times 292$ \texttt{ash958} matrix from SparseSuite, a $5000\times 1000$ uniformly random matrix, and a $5000\times 1000$ Gaussian matrix \cite{sparsesuite}. In each case, the rows were normalized. The quantiles in Figure \ref{fig:rqRK_comp} were chosen by starting at $q=\frac{1}{2}$ and then increased by $0.1$ to obtain the next quantile except at the largest $q$ value where we chose $\frac{m-1}{m}$ specifically. Our first iterate $x_0$ was initialized at the origin. The solution $\x$ was generated at random and was used to produce $b$. 
        
        Unsurprisingly, the larger the utilized quantile is, the better \texttt{rqRK} performs, as is demonstrated in Figure \ref{fig:rqRK_comp}. It should be noted that not pictured here are the cases when $q<\frac{1}{2}$. In these instances, it appears \texttt{rqRK} still typically outperforms \texttt{RK}; however, it becomes more difficult to distinguish the behaviors of two error plots for close quantiles $q, q' \ll \frac{1}{2}$. Interestingly, there is rarely overlap in the approximation error plots when $q$ is sufficiently close to 1; by extension, the Motzkin method, which corresponds to the dashed red line, appears to be a lower bound for all the approximation errors for $q<\frac{m-1}{m}$. 
    
    \subsection{Numerical Results for \texttt{dqRK}}

        \begin{figure}[!htb]
            \centering
            \input{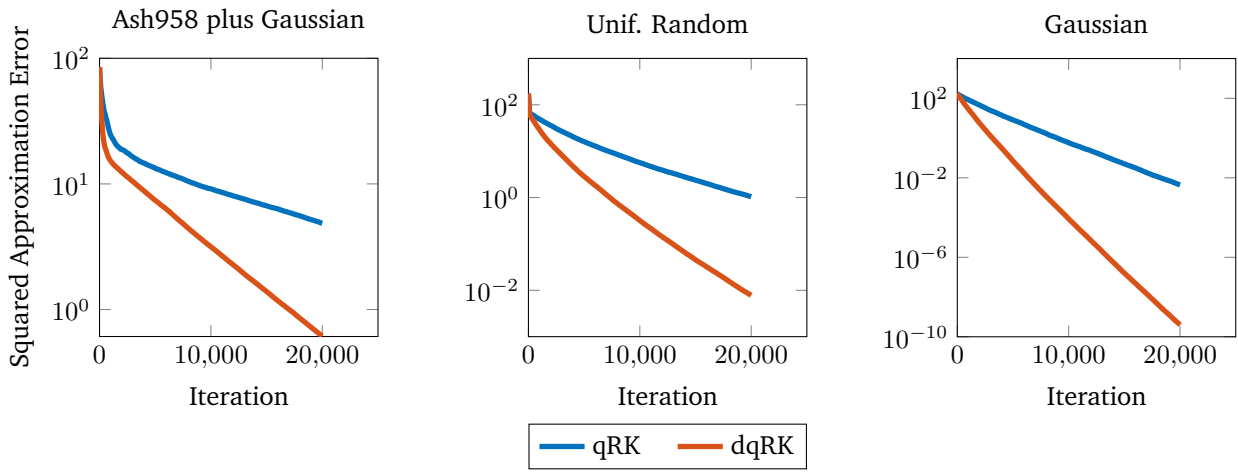}
            \caption{Comparison between sample runs of \texttt{qRK} and \texttt{dqRK} for $q_0=0.6$, $q_1=0.8$ on a linear system with sparse noise \eqref{eq:corruptsyseq} with $\beta = 0.05$.}
            \label{fig:dqRK_comp}
        \end{figure}
        
        Figure~\ref{fig:dqRK_comp} compares \texttt{qRK} and \texttt{dqRK} for $q_0=0.6$, $q_1=0.8$, and $\beta = 0.05$ on the \texttt{ash958} matrix from SparseSuite plus a Gaussian matrix scaled down by a factor of 100, a $2500\times 500$ uniformly random matrix, and a $2500\times 500$ Gaussian matrix to solve sparse noisy systems \cite{sparsesuite}. The corruption vector $b_\ve$ was generated by uniformly randomly choosing the indices that would receive non-zero entries randomly chosen between 0 and 1. Using a randomly generated solution $\x$, we set $b_t= A\x$ and $b=b_\ve+b_t$ to create the corrupt system of equations 
        \begin{equation}
            Ax = b. \label{eq:corruptsyseq}
        \end{equation}
        In each case, the rows were normalized. 
        
        Just as \texttt{rqRK} accelerated \texttt{RK}, we see experimental results in Figure \ref{fig:dqRK_comp} indicating \texttt{dqRK} has a similar effect on \texttt{qRK}. Perhaps more importantly, we also observe experimentally in Table \ref{table:algruntimes} that the performance cost to implement \texttt{dqRK} is almost identical to that of \texttt{qRK}. The most expensive computation is determining the terms $\groupabs{\lll x_k,a_j\rrr -b_j}$ and the corresponding quantiles. However, \texttt{qRK} already requires the computation of the $\groupabs{\lll x_k,a_j\rrr -b_j}$ terms, so utilizing them in \texttt{dqRK} adds no computation cost. As for the quantile computations, it is evident finding a second quantile adds a minimal amount of processing time to \texttt{dqRK} due to \texttt{MATLAB}'s specific method of computing quantiles. As noted in their documentation, a sorting-based algorithm is used, so only one ``sort'' is required to compute one or more quantiles. This is highlighted in Table~\ref{table:algruntimes} where runs of 1000 iterations was implemented for each matrix size. For each matrix, $q_0=0.6$, $q_1=0.8$, and $\beta =0.05$ were used, and the solution and corruption in $b$ is generated the same way as in Figure \ref{fig:dqRK_comp}. Matlab's \texttt{timeit} function was used to measure the time taken to run the 1000 iterations. To further reinforce this point, Table \ref{table:thresholdtimes} displays the times it takes \texttt{qRK} and \texttt{dqRK} with Gaussian matrices to reach an error threshold. More specifically, for each matrix dimension, a Gaussian matrix of the specified size is generated, $b$ is generated the same way as in Figure \ref{fig:dqRK_comp}, and the times to reach the threshold squared approximation error $10^{-8}$ by \texttt{qRK} and \texttt{dqRK} using parameters $q_0=0.6$ and $q_1=0.8$ are recorded. The times were again measured using Matlab's \texttt{timeit} function.
        
        \begin{table}[!htb]
            \centering
            \begin{tabular}{ |p{3cm}||p{2.7cm}|p{2.7cm}|}
                \hline
                \multicolumn{3}{|c|}{Algorithm Run Times}                                       \\
                \hline
                Matrix Dimension        & \texttt{qRK} Time (s)     &  \texttt{dqRK} Time (s)   \\
                \hline
                $1000\times 100$        & 0.092                     & 0.099                     \\
                $1000\times 500$        & 0.192                     & 0.211                     \\
                $5000\times 100$        & 0.290                     & 0.283                     \\
                $5000\times 500$        & 0.786                     & 0.793                     \\
                $5000\times 1000$       & 1.481                     & 1.460                     \\
                $10000\times 1000$      & 2.967                     & 2.964                     \\
                $50000\times 1000$      & 15.308                    & 15.202                    \\
                $100000\times 1000$     & 30.446                    & 29.961                    \\
                \hline
            \end{tabular}
            \caption{The matrices of each listed size are generated with uniformly random entries. Times are rounded to the nearest millisecond.}
            \label{table:algruntimes}
        \end{table}
        
        \begin{table}[!htb]
            \centering
            \begin{tabular}{ |p{3cm}||p{2.7cm}|p{2.7cm}|}
                \hline
                \multicolumn{3}{|c|}{Time to Reach Threshold}                                       \\
                \hline
                Matrix Dimension        & \texttt{qRK} Time (s)         &  \texttt{dqRK} Time (s)   \\
                \hline
                $1000\times 100$        & 0.509                         & 0.211                     \\
                $1000\times 500$        & 109.667                       & 32.281                    \\
                $5000\times 100$        & 1.237                         & 0.503                     \\
                $5000\times 500$        & 24.021                        & 9.026                     \\
                $5000\times 1000$       & 151.564                       & 53.479                    \\
                $10000\times 1000$      & 192.809                       & 70.784                    \\
                $50000\times 1000$      & 801.50                        & 309.11                    \\
                $100000\times 1000$     & 1473.77                       & 569.09                    \\
                \hline
            \end{tabular}
            \caption{The matrices used are Gaussian matrices with the indicated dimension. Times are rounded to the nearest millisecond.}
            \label{table:thresholdtimes}
        \end{table}
        
        Figure \ref{fig:dqRK_vary_quantile} provides insight into the relative effect of each quantile parameter. The corruption vector $b_\ve$ was generated using the same method outlined for Figure \ref{fig:dqRK_comp}. Lines of the same color utilize the same value for $q_0$, and lines of the same style utilize the same value for $q_1$. Figure \ref{fig:dqRK_vary_quantile} suggests using a larger upper or lower quantile noticeably increases the rate of convergence. This should be expected, as increasing either $q_0$ or $q_1$ increases the likelihood \texttt{dqRK} will generate iterates with larger distances between successive points. We point out however, this is not without a caveat, as the convergence of these runs is heavily influenced by $q_1<1-\beta$. As such, implementation of \texttt{dqRK} might require a balance to be struck between the quantile values, which influences the convergence rate, and the size of $\beta$, the proportion of corrupted entries allowed. 
        
        \begin{figure}[!htb]
            \centering
            \input{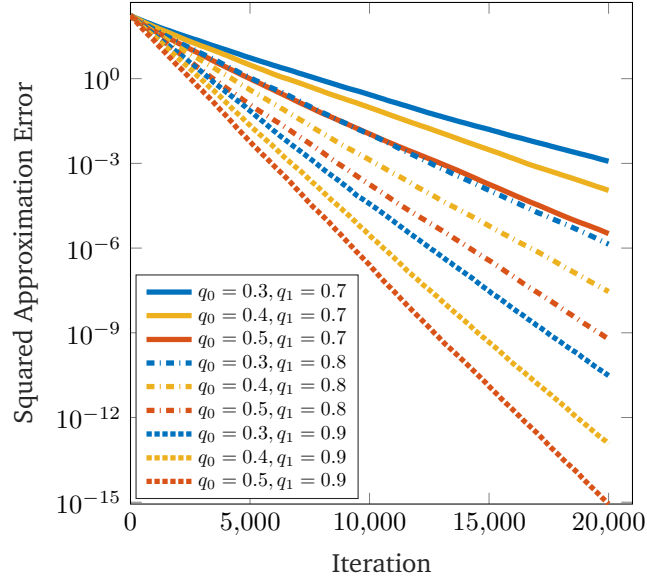}
            \caption{Comparison between sample runs of \texttt{dqRK}, using varying quantile parameters. The error proportion parameter $\beta$ is fixed at $0.05$.}
            \label{fig:dqRK_vary_quantile}
        \end{figure}

\section{Final Remarks}
    \label{sec:final_rem}
    The last comment is a note on the failure of \texttt{dqRK} on some sparse matrices where sparse corruption in present in $b$. Oftentimes, if a matrix is too sparse, we might witness behavior different from that displayed in Figure \ref{fig:dqRK_comp}. Despite not converging to the exact solution $\x$, it does appear to hone in on an error horizon smaller than that which might be accomplished by standard \texttt{RK} as outlined in \cite{needell_LS}. Moreover, a similar pattern is present with \texttt{qRK}, but this error horizon is approached notably faster in the \texttt{dqRK} scheme. This information is encapsulated in Figure \ref{fig:error_horizon}. Empirically, this scenario has been repeated with other sparse matrices with appropriate quantile values. We leave the further exploration of this phenomena for future work.

    \begin{figure}[!htb]
        \centering
        \input{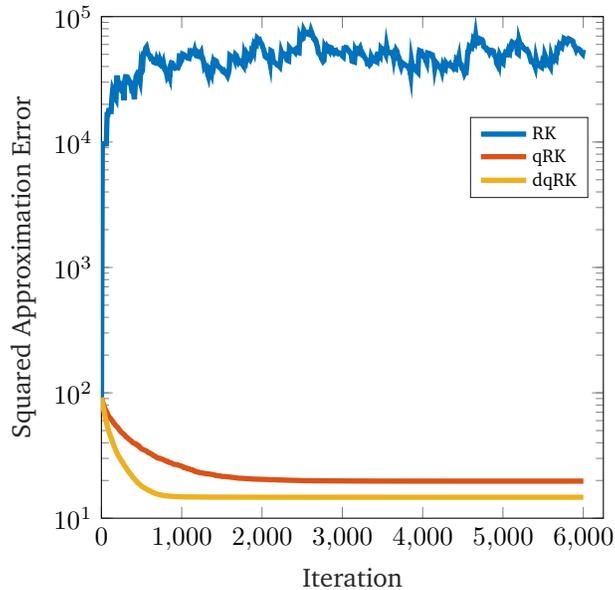}
        \caption{Comparison between a sample run of \texttt{RK}, \texttt{qRK}, and \texttt{dqRK} for $q_0=0.6$, $q_1=0.8$, and $\beta = 0.05$ on the row-normalized \texttt{ash958} matrix from SparseSuite \cite{sparsesuite}.}
        \label{fig:error_horizon}
    \end{figure}
    
    The corrupted $b$ and solution $\x$ in Figure \ref{fig:error_horizon} were generated just as in Figure \ref{fig:dqRK_comp}, with the exception that the corruption present in $b$ was scaled by a factor of 100. To further explore this failure of convergence, Table \ref{table:illcondition} presents entries of at most 5 significant figures where we report $\s_{1-1/m,\min}(A)$ and 
    \begin{equation}
        \begin{split}
            E_{q_0,q_1,\beta}(A)&=\frac{1}{\s_{\max}^2(A)}\groupp{\s_{1-1/m,\min}^2(A)+\frac{\s_{1-1/m,\min}^2(A)}{q_0m}} \\
            &\quad-\frac{q_1}{q_1-q_0-\beta}\groupp{\frac{2\sqrt{\beta}}{\sqrt{1-q_1-\beta}}+\frac{\beta}{1-q_1-\beta}}.
        \end{split}
        \label{eq:E} 
    \end{equation} 
    Notably, the matrices from SpareSuite used in Table \ref{table:illcondition} are row-normalized prior to the computation of these \cite{sparsesuite}. 
    
    Given that $\s_{\alpha,\min}(A)$ is increasing in $\alpha$, it is easy to see that anytime the $E_{q_0,q_1,\beta}(A)$ entry in Table \ref{table:illcondition} is negative, the corresponding matrix does not satisfy the sufficient condition for \texttt{dqRK} to converge. Namely, if $E_{q_0,q_1,\beta}(A)\leq 0$, we have 
    \begin{align*}
        \frac{q_1}{q_1-q_0-\beta}\groupp{\frac{2\sqrt{\beta}}{\sqrt{1-q_1-\beta}}+\frac{\beta}{1-q_1-\beta}} &\geq \frac{1}{\s_{\max}^2(A)}\groupp{\s_{1-1/m,\min}^2(A)+\frac{\s_{1-1/m,\min}^2(A)}{q_0m}} \\
        &\geq \frac{1}{\s_{\max}^2(A)}\groupp{\s_{q_1-\beta,\min}^2(A)+\frac{\s_{q_0-\beta,\min}^2(A)}{q_0m}}.
    \end{align*}
    These matrices are sparse, so the failure to meet the sufficient condition can heuristically be thought of as lacking redundant information. That is, some information might be encoded solely in a small batch of rows, which means any time corruption occurs in these rows, we risk losing too much information to recover our desired $x$ vector. 
    
    \begin{table}[!htb]
        \centering
        \begin{tabular}{ |p{4cm}||p{4cm}|p{4cm}|}
            \hline
            \multicolumn{3}{|c|}{Ill-Conditioned Matrices}                                  \\
            \hline
            Matrix $A$              &  $\s_{1-1/m,\min}(A)$     & $E_{q_0,q_1,\beta}(A)$    \\
            \hline
            \texttt{ash958}         & 0.7392                    & -3.3403                   \\
            \hline
            \texttt{photogrammetry} & 1.1907e-08                & -3.4012                   \\
            \hline 
            \texttt{well1033}       & 2.9906e-18                & -3.4012                   \\
            \hline 
            \texttt{illc1033}       & 1.0791e-18                & -3.4012                   \\
            \hline
            \texttt{WorldCities}    & 0.1675                    &  -3.4010                  \\ 
            \hline 
            \texttt{ash608}         & 0.5616                    & -3.3612                   \\
            \hline
        \end{tabular}
        \caption{Values $\s_{1-1/m,\min}(A)$ and $E_{q_0,q_1,\beta}(A)$ for varying sparse matrices. Matrices were pulled from SparseSuite, and the rows of each matrix were normalized \cite{sparsesuite}. The parameters used were $q_0=0.6$, $q_1=0.8$, and $\beta=0.05$. }
        \label{table:illcondition}
    \end{table}

\bibliographystyle{plain}
\bibliography{refs.bib}

\end{document}